\documentclass[12pt, 3p]{article}
\usepackage{amsmath,amsthm,amsfonts,amssymb}
\usepackage{fullpage}
\usepackage{subfig}
\usepackage{blkarray}
\usepackage{caption}
\usepackage{float}
\usepackage{algpseudocode}
\usepackage{tikz}
\usepackage{enumitem}

\usepackage{hyperref}
\numberwithin{equation}{section}
\usepackage{xcolor}
\usepackage{colortbl}
\usepackage[normalem]{ulem}
\usepackage{sectsty}
\usepackage{mathdots}
\definecolor{astral}{RGB}{46,116,181}
\subsectionfont{\color{astral}}
\sectionfont{\color{astral}}
\usepackage{colortbl}
\DeclareMathAlphabet{\mathpzc}{OT1}{pzc}{m}{it}
\DeclareFontFamily{OT1}{pzc}{}
\DeclareFontShape{OT1}{pzc}{m}{it}{<-> s * [0.900] pzcmi7t}{}
\DeclareMathAlphabet{\mathpzc}{OT1}{pzc}{m}{it}
\usepackage{longtable}
\usepackage{amsbsy}
\usepackage{accents}
\usepackage{arydshln}
\newlength{\dhatheight}

\newenvironment{frcseries}{\fontfamily{frc}\selectfont}{}

\newenvironment{key}
{\par\textbf{Keywords:\,}\hangindent=2.9cm\hangafter=1}
{\par}

\newenvironment{AMS}
{\par\textbf{Mathematics Subject Classification: }\hangindent=2.2cm\hangafter=1}
{\par}

\usepackage[a4paper,
  top=25mm,
  bottom=25mm,
  left=20mm,
  right=20mm
]{geometry}

\DeclareMathAlphabet\mathbfcal{OMS}{cmsy}{b}{n}
\definecolor{darkslategray}{rgb}{0.18, 0.31, 0.31}
\definecolor{warmblack}{rgb}{0.0, 0.26, 0.26}

\usepackage{multirow}

\usepackage{mathtools}
\usepackage{algorithm}
\usepackage{algorithmicx}
\makeatletter
\def\BState{\State\hskip-\ALG@thistlm}
\makeatother
\newtheorem{theorem}{Theorem}[section]
\newtheorem{lemma}[theorem]{Lemma}
\newtheorem{corollary}[theorem]{Corollary}

\theoremstyle{definition}
\newtheorem{definition}{Definition}[section]
\newtheorem{remark}{Remark}[section]
\newtheorem{example}{Example}[section]

\title{Solution Analysis of Tensor Equation $\mathcal{A} \ltimes \mathcal{X} \ltimes \mathcal{B}= \mathcal{C}$ via Semi Tensor Product with t-product}
\author{\small Bhawna Garg \thanks{Department of Mathematics, NIT Raipur,
Raipur-492010, India({\tt bhawna1601@gmail.com}). }, Ranjan Kumar Das \thanks{Corresponding author, Department of Mathematics, NIT~Raipur,
Raipur-492010, India({\tt rkdas.maths@nitrr.ac.in}). } 
\date{}}

\begin{document}
% \title{\textbf{Solution Analysis of Tensor Equation $\mathcal{A} \ltimes \mathcal{X} \ltimes \mathcal{B}= \mathcal{C}$ via Semi Tensor Product with t-product}}
\maketitle
\begin{abstract}
    This paper focuses on the analysis of the tensor equation $\mathcal{A\ltimes X\ltimes B=C}$, formulated via the semi tensor product with t-product. For the unknown vector $\mathcal{X}$, we establish a necessary and sufficient condition that provides an equivalence criterion for the existence of solutions. For matrix valued and higher-order tensor valued unknown $\mathcal{X}$, solvability is determined by corresponding compatibility requirements. Moreover, the explicit structure (Toeplitz and Circulant) of $\mathcal{C}$ is characterized. The derived results are supported by several illustrative examples.
\noindent
\end{abstract}

\begin{key}Tensor Equation, Semi Tensor Product, t-product, Compatibility Conditions, Toeplitz and Circulant Tensor.
\end{key}

\begin{AMS}
15A69, 15A60.
\end{AMS}

\section{Introduction}
Tensors provide a powerful and natural framework for representing and analyzing multiway data arising in modern scientific and engineering applications, see \cite{ A.Be2025, A.BE2024,J.Coopertensorapplication,T.Kolda2009Tensordecomposition,D.mishra2017,D.Mishra2022,D.mishra2025} and the references therein. Unlike matrices, tensors are capable of preserving intrinsic multidimensional structures and correlations, which renders them essential in applications such as signal processing, image and video analysis, machine learning, control systems, and networked dynamical systems \cite{D.S.~Burdicktensorapplication,H.Jintensorapplication,Y.Jitensorapplication,D.mishra2020,V.Shekhar2021}. As the dimensionality and complexity of data continue to increase, tensor-based modeling has become increasingly important.

The algebraic manipulation of tensors relies heavily on the choice of tensor products. In last decades, various tensor products have been introduced in the literature, including the Kronecker product \cite{K.Batselier2017Kroneckerproductoftensor}, k-mode product \cite{L.Qi2017Spectraltheory}, Einstein product \cite{Einstein2007Relativity}, Tucker product \cite{T.Kolda2009Tensordecomposition}, and the t-product \cite{M.Kilmer2011factorizationfortensor}. Among these, the t-product gain significant attention due to its strong algebraic properties, ability to preserve matrix-like structures, and computational efficiency through block circulant representations and fast Fourier transforms. However, most existing tensor products, including the standard t-product, impose strict dimensional compatibility requirements, which limit their applicability in practical problems involving tensor dimension.

To overcome dimensional incompatibility, Cheng et al. \cite{D.cheng2012STPofmatrices} introduced the \emph{semi-tensor product (STP)}, which generalizes the conventional matrix product by embedding matrices into higher-dimensional spaces via the Kronecker product. The STP coincides with the standard matrix multiplication when dimensions are compatible, while remaining well defined for arbitrary dimensions. Due to its flexibility, the STP has become an important algebraic tool in control theory, Boolean networks, game theory, and nonlinear dynamical systems, and has been successfully extended from matrices to higher-order tensors \cite{J.FathitensorequationAX=BunderSTP,J.2017STPapplication,W.Liu2022STPoftensor}.

Matrix equations of the form
$AX=C$
play a fundamental role in systems theory and numerical analysis \cite{J.FathitensorequationAX=BunderSTP, J.Yao2016solutionAX=B}. Under the STP framework, this classical equation has been extended to cases with incompatible dimensions, and further generalized to tensor equations, allowing the unknown variable $X$ to be a vector, matrix, or tensor form. These developments have significantly broadened the scope of solvable linear systems in multilinear algebra \cite{J.FathitensorequationAX=BunderSTP}.
\par Beyond the equation $AX=C,$ more general matrix equation of the form
$AXB=C$ have also been extensively studied by employing the STP approach. Several studies have investigated existence conditions and computational approaches for the matrix equation
$A \ltimes X \ltimes B = C$, where $\ltimes$ is referred as semi-tensor product and the matrices $A$, $B$, $C$ are given and the matrix $X$ is to be determined \cite{Z.Dhong2019AXB=CunderSTP}. These results demonstrate the effectiveness of STP.

Motivated by the matrix equation $ A \ltimes X \ltimes B = C $, this paper extends the framework to the tensor setting by integrating the semi-tensor product (STP) with the t-product. We investigate the tensor equation $\mathcal{A} \ltimes \mathcal{X} \ltimes \mathcal{B} = \mathcal{C}$, where $\mathcal{A}$, $\mathcal{B}$, and $\mathcal{C}$ are third-order tensors and the unknown $\mathcal{X}$ may be vector, matrix, or higher-order tensor valued. The proposed framework is applicable to tensors of arbitrary sizes due to the dimensional flexibility of the STP. The structure of the paper is as follows: Section~2 introduces the necessary definitions and notation; Section~3 presents a detailed study of the tensor--vector case where compatibility conditions for solvability are established along with explicit solution procedures and the structure of $\mathcal{C}$ is characterized; Sections~4 and~5 extend the analysis to the tensor--matrix and tensor--tensor cases, respectively, where compatibility conditions, solution structures are examined within the same unified framework.

%%%%%%%%%%%%%%%%%%%%%%%%%%%%%%%%%%%%%%%%%%%%%%%%%%%%%%%%%%%%%%%%%%%%%%%%%%%%%%%%%%%%%%%%%%%%%%%%%%%%%%
\section{Preliminaries}
In this section, we briefly introduce the notation and fundamental concepts used throughout the paper. Unless otherwise stated, all symbols are employed consistently. The sets $\mathbb{N},$  $\mathbb{R}$ and $\mathbb{C}$ denote the set of positive integers, real and complex number respectively, while $\mathbb{R}^n  (\text{resp.} \ \mathbb{C}^n)$ represents the set of real (complex) column vectors of dimension n, and $\mathbb{R}^{m\times n} (\text{resp.}\ \mathbb{C}^{m\times n})$ denotes the space of real (complex) matrices of size  $m\times n$. Uppercase letters are used to denote matrices, italicized uppercase letters denote vectors, lowercase letters denote scalars, and these conventions naturally extend to block structures and higher-order objects.The $(i,j)^{th}$ element of a matrix $A$ is denoted by $a_{ij}$, and $A_i$ represents its $i^{th}$ column. For positive integers $m$ and $n$, $\operatorname{lcm}(m,n)$ and $\operatorname{gcd}(m,n)$ denote their least common multiple and greatest common divisor, respectively. Throughout this paper, the rank of a matrix (or tensor unfolding) is denoted by $\rho$.
A tensor is a multidimensional array, and its order-also called the number of modes corresponds to the number of indices required to specify an element. Scalars, vectors, and matrices can be viewed as tensors of order zero, one, and two, respectively. \cite{J.FathitensorequationAX=BunderSTP} For an $N^{th}$ order tensor $\mathcal{A} \in \mathbb{R}^{n_1 \times n_2 \times \cdots \times n_N}$, the notation $a_{i_1 i_2 \cdots i_N}$, where $1 \leq i_{j}\leq n_{j}$ for $j = 1, \ldots, N,$ denotes its $(i_1, i_2, \ldots, i_N)^{th}$ entry.
Given a tensor $\mathcal{A} \in \mathbb{R}^{n_1 \times n_2 \times \cdots \times n_N}$, fixing the last index 
$ \mathcal{A}_{: \, : \, \cdots \, : \, k}, \ k = 1,2,\ldots,n_N,$
which lies in $\mathbb{R}^{n_1 \times n_2 \times \cdots \times n_{N-1}}$ and is referred to as a \emph{frontal slice}.\\
More generally, \emph{fibers} are obtained by fixing all but one index of the tensor, serving as the higher-order generalization of matrix rows and columns.\\
\begin{figure}[htbp]
    \centering
    \includegraphics[width=0.4\textwidth]{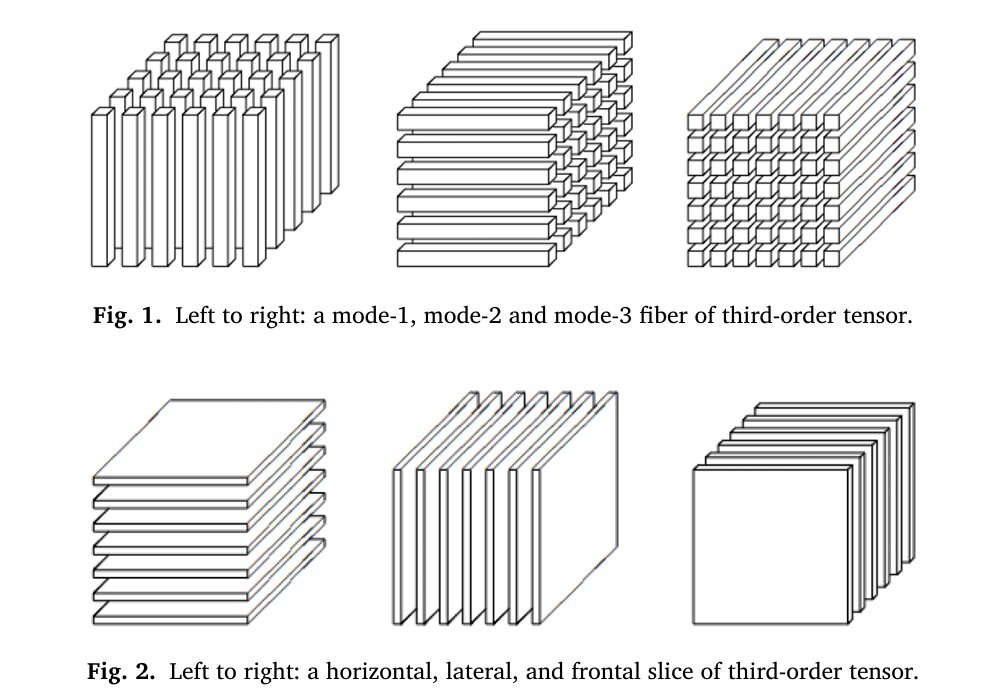}
    \label{fig:example}
\end{figure}\\
We consider third-order tensors ($N=3$), denoted by calligraphic letters.
A tensor $\mathcal{A} \in \mathbb{C}^{n_1 \times n_2 \times n_3}$ is written as
$\mathcal{A}=(a_{ijk})_{i,j,k=1}^{n_1,n_2,n_3}$.
The notations $\mathcal{A}{(i:k)}$, $\mathcal{A}{(:jk)}$, and $\mathcal{A}{(ij:)}$ represent the fibers of the tensor $\mathcal{A}$ along the first, second, and third modes, respectively, see Fig.1.
Likewise, $\mathcal{A}{(i::)}$, $\mathcal{A}{(:j:)}$, and $\mathcal{A}{(::k)}$
respectively the $i^{th}$ horizontal, $j^{th}$ lateral, and $k^{th}$ frontal slices of $\mathcal{A}$,
each of which is a matrix, see Fig.2. A tensor multiplication scheme is developed by viewing a third-order tensor as a stack of frontal matrices. Specifically, for $\mathcal{A} \in \mathbb{R}^{m \times n \times r}$, the tensor is composed of $r$ frontal slices of dimension $m \times n$, denoted by $\mathcal{A}{(::1)}, \mathcal{A}{(::2)}, \ldots, \mathcal{A}{(::r)}.$
%%%%%%%%%%%%%%%%%%%%%%%%%%%%%%%%%%%%%%%%%%%%%%%%%%%%%%%%%%%%%%%%%%%%%%%%%%%%%%%%%%%%%%%%%%%%%%%%%%%%%%%%%%%%%%%%%%%%

\begin{definition}(t-product, \cite{ M.Kilmer2008SVDtensor,M.Kilmer2011factorizationfortensor})
Let $\mathcal{A}\in\mathbb{C}^{m\times n\times r}$ and 
$\mathcal{B}\in\mathbb{C}^{n\times s\times r}$ be third-order tensors.
The \emph{$t$-product} of $\mathcal{A}$ and $\mathcal{B}$, denoted by
$\mathcal{A} * \mathcal{B}$, is defined as an $m\times s\times r$ tensor given by
\[
\mathcal{A} * \mathcal{B}
:=
\operatorname{fold}\!\left(
\operatorname{bcirc}(\mathcal{A}).
\operatorname{unfold}(\mathcal{B})
\right),
\]
 $\text{where}\ 
\operatorname{unfold}(\mathcal{A}) =
\begin{bmatrix}
\mathcal{A}_{::1}\\
\mathcal{A}_{::2}\\
\vdots\\
\mathcal{A}_{::r}
\end{bmatrix}
\in \mathbb{C}^{mr\times n}, \text{and}\ 
\operatorname{bcirc}(\mathcal{A}) =
\begin{bmatrix}
\mathcal{A}_{::1} & \mathcal{A}_{::r} & \cdots & \mathcal{A}_{::2}\\
\mathcal{A}_{::2} & \mathcal{A}_{::1} & \cdots & \mathcal{A}_{::3}\\
\vdots            & \vdots            & \ddots & \vdots\\
\mathcal{A}_{::r} & \mathcal{A}_{::r-1} & \cdots & \mathcal{A}_{::1}
\end{bmatrix}.
$\\
Here, $\operatorname{fold}$ is the inverse operator of $\operatorname{unfold}$,
$`*`$ denotes the $t$-product of third-order tensors, and $`\cdot`$ represents the
standard matrix multiplication.
\end{definition}
The $t$-product generalizes classical matrix multiplication to third-order
tensors and provides an algebraic framework that supports many matrix-like
operations for tensor computations.
\begin{example}
 Assume the tensors $\mathcal{A} \in \mathbb{R}^{2\times 3 \times 3 }$ and $\mathcal{B} \in \mathbb{R}^{3\times 3 \times 3 },$ such that\\
 \vspace{-2em}
\begin{center}
\begin{tabular}{ccc|ccc}
 \hline
\multicolumn{3}{c}{${{\mathcal{A}}}(:,:,1)$} & 
\multicolumn{3}{c}{${\mathcal{A}}(:,:,2)$}  \\
\hline
 1 & 2&0&2&3 & 2 \\
 0 & 1&1&1 &0& 1
 \end{tabular}  \text{and} \ 
\begin{tabular}{ccc|ccc|ccc}
\hline
 \multicolumn{3}{c}{${{\mathcal{B}}}(:,:,1)$} & 
 \multicolumn{3}{c}{${\mathcal{B}}(:,:,2)$} &
 \multicolumn{3}{c}{${\mathcal{B}}(:,:,3)$}  \\
\hline
0 & 1&3&1&1&0&2&2&0 \\
1 & 0&1&3&0&1&1&3&0\\
3&1&0& 1&1&1&1&0&1
\end{tabular}
\end{center}  
$$\mathcal{C}= \mathcal{A}* \mathcal{B} = \text{fold} \left ( \begin{bmatrix}
    17&17&10\\
    19&6&6\\
    21&10&13\\
    15&16&9\\
    24&15&9\\
    10&8&11
\end{bmatrix} \right ).$$
\end{example}
\begin{definition}(Kronecker Product \cite{K.Batselier2017Kroneckerproductoftensor, W.Liu2022STPoftensor})
Let $\mathcal{A}=(a_{i_1 i_2 \cdots i_m})\in\mathbb{R}^{n_1\times n_2\times\cdots\times n_m}$
and
$\mathcal{B}=(b_{j_1 j_2 \cdots j_m})\in\mathbb{R}^{p_1\times p_2\times\cdots\times p_m}$
be two tensors of the same order $m$.
The \emph{Kronecker product} of $\mathcal{A}$ and $\mathcal{B}$, denoted by
$\mathcal{A}\otimes\mathcal{B}$, is defined as an $m^{th}$order tensor of size
$n_1p_1\times n_2p_2\times\cdots\times n_mp_m$ given by
\[
\mathcal{A}\otimes\mathcal{B}
:=
\bigl(a_{i_1 i_2 \cdots i_m}\,\mathcal{B}\bigr).
\]
More explicitly, its entries satisfy
$(\mathcal{A}\otimes\mathcal{B})_{\ell_1\,\ell_2\cdots\ell_m}
=
a_{i_1 i_2 \cdots i_m}\,
b_{j_1 j_2 \cdots j_m},$
here, each index $\ell_k$ represents a linear ordering of the pair $(i_k,j_k)$ defined by
\[
\ell_k=(i_k-1)p_k+j_k,\quad 1\le k\le m,
\]
with $p_k$ being the size of the $k^{th}$ mode of tensor $\mathcal{B}$.
\end{definition}

\begin{definition}(Identity Tensor {\cite{Z.Chen2023SVDlikedecomposition}})
An $n \times n \times k$ third-order tensor $\mathcal{I}_{nnk}$ is referred to as an
\emph{identity tensor} if its first frontal slice equals the $n \times n$
identity matrix and all remaining frontal slices are zero matrices.
In the special case $k=1$, the tensor $\mathcal{I}_{nn1}$ consists of a single
frontal slice that is the $n \times n$ identity matrix; for simplicity, this
tensor is denoted by $\mathcal{I}_n$.
\end{definition}

\begin{definition}(Semi Tensor Product of third-order Tensors {\cite{J.FathitensorequationAX=BunderSTP}})
Let $\mathcal{A}=(a_{i_1 i_2 i_3}) \in \mathbb{C}^{m \times n \times r}$ and
$\mathcal{B}=(b_{j_1 j_2 j_3}) \in \mathbb{C}^{p \times q \times s}$ be two
third-order tensors.
The \emph{STP} of $\mathcal{A}$ and $\mathcal{B}$,
denoted by $\mathcal{A} \ltimes \mathcal{B}$, is defined as
\[
\mathcal{A} \ltimes \mathcal{B}
:=
\bigl(
\mathcal{A} \otimes I_{\frac{t_{1}}{n} \times \frac{t_{1}}{n} \times \frac{t_{2}}{r}}
\bigr)
*
\bigl(
\mathcal{B} \otimes I_{\frac{t_{1}}{p} \times \frac{t}{p} \times \frac{t_{2}}{s}}
\bigr)
\in \mathbb{C}^{\frac{mt_{1}}{n} \times \frac{qt_{1}}{p} \times t_{2}},
\]
where $t_{1}=\operatorname{lcm}(n,p)$ and $t_{2}=\operatorname{lcm}(r,s)$.
\end{definition}
Note that when $n=p$ and $r=s$, the STP reduces to the standard
$t$-product of third-order tensors. In addition, the STP preserves the
fundamental algebraic properties of the $t$-product while extending its
applicability to tensors whose dimensions are not necessarily compatible.
As a result, the STP serves as a natural generalization of
the $t$-product. Moreover, in the special case of second-order tensors, this construction reduces to the standard STP defined for matrices.
%%%%%%%%%%%%%%%%%%%%%%%%%%%%%%%%%%%%%%%%%%%%%%%%%%%%%%%%%%%%%%%%%%%%%%%%%%%%%%%%%%%%%%%%%%%%%%%%%%%%%%%%

\section{Solvability analysis for tensor equation with vector solutions}
Here we examine solvability properties of tensor–vector equations formulated through the STP:
\begin{equation}\label{main equation}
\underbrace{\underbrace{\mathcal{A} \ltimes X}_{\frac{m l_1}{n} \times \frac{l_1}{p} \times l_3}\ltimes
\mathcal{B}}_{\frac{m l_2 p}{n} \times \frac{b l_2}{a} \times l_4}  = \mathcal{C} \in \mathbb{C}^{h \times k \times r},
\end{equation}
here $\mathcal{A} \in \mathbb{C}^{m \times n \times r}$,
$\mathcal{B} \in \mathbb{C}^{a \times b \times r}$, 
$\mathcal{C} \in \mathbb{C}^{h \times k \times r}$ are known tensors, $X \in \mathbb{C}^{p}$ is the unknown vector to be determined and $l_3=\operatorname{lcm}\{r,1\}, \  
l_1 = \operatorname{lcm}\{n,p\},\ 
l_2 = \operatorname{lcm}\{\frac{l_1}{p}, a\},
\  l_4=\operatorname{lcm}\{l_3,r\}.$
We first analyze the special case $h=m$ then we will extend the discussion with unrestricted dimensions case.
\subsection{The special case corresponds to \texorpdfstring{$h= m$}{h = m}}
Here we investigate the compatibility of Eq.(\ref{main equation}) when $h= m$. We first present a lemma that provides necessary conditions on the tensor dimensions for the Eq.(\ref{main equation}) to admit a solution, along with constraints on the size of the solution vector $X\in \mathbb{C}^{p}$.
\begin{lemma} \label{m=h vector case}
The existence of a solution vector of size $p$ for Eq.(\ref{main equation}) with $h=m$ imposes the following necessary dimensional conditions on $\mathcal{A}$, $\mathcal{B}$, $\mathcal{C}$ and $p$: \vspace{-0.4em}
\begin{enumerate}
\item[(i)] $\frac{k}{b}, \frac{n}{a}$ must belong to $\mathbb{N}$ and $\frac{k}{b} \mid \frac{n}{a}$;
~ (ii) $p = \frac{bn}{ak}$.
\end{enumerate}
\end{lemma}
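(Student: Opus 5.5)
The plan is to compare the sizes of the two sides of Eq.~(\ref{main equation}), working mode by mode. The dimension annotations under the braces give the sizes directly. First, $X\in\mathbb{C}^p$ is regarded as a $p\times 1\times 1$ tensor. Applying the STP definition with $t_1=l_1=\operatorname{lcm}(n,p)$ and $t_2=l_3=\operatorname{lcm}(r,1)=r$, we get that $\mathcal{A}\ltimes X$ has size $\frac{ml_1}{n}\times\frac{l_1}{p}\times r$. Next we multiply on the right by $\mathcal{B}\in\mathbb{C}^{a\times b\times r}$. Here the inner dimensions are $\frac{l_1}{p}$ and $a$, so $l_2=\operatorname{lcm}(\frac{l_1}{p},a)$ and $l_4=r$. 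The product therefore has size
\[
\frac{ml_1}{n}\cdot\frac{l_2}{l_1/p}\times\frac{bl_2}{a}\times r=\frac{ml_2p}{n}\times\frac{bl_2}{a}\times r .
\]
The third mode automatically matches $\mathcal{C}$. The solvability requirement is then exactly the pair of equations $\frac{ml_2p}{n}=h=m$ and $\frac{bl_2}{a}=k$.

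\textbf{First-mode equation.} From $h=m$ we get $l_2p=n$. Since $l_2\ge \frac{l_1}{p}$ and $l_1\ge n$, we have $l_2p\ge l_1\ge n$. Equality therefore forces $l_1=n$, which means $p\mid n$, and also $l_2=\frac{n}{p}$. Substituting $l_1=n$ into $l_2=\operatorname{lcm}(\frac{n}{p},a)=\frac{n}{p}$ gives $a\mid\frac{n}{p}$. In particular $a\mid n$, so $\frac{n}{a}\in\mathbb{N}$.

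\textbf{Second-mode equation.} The condition $\frac{bl_2}{a}=k$ becomes $\frac{bn}{ap}=k$. Solving for $p$ yields $p=\frac{bn}{ak}$, which is (ii). Since $\frac{n}{ap}=\frac{l_2}{a}$ is a positive integer (because $a\mid l_2$), we obtain $\frac{k}{b}=\frac{n}{ap}\in\mathbb{N}$. Finally, $\frac{n}{a}=p\cdot\frac{k}{b}$ with $p\in\mathbb{N}$, so $\frac{k}{b}\mid\frac{n}{a}$. This establishes (i).

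\textbf{Main obstacle.} The delicate step is the first-mode equation. One has to justify $l_2p\ge l_1\ge n$ and extract both divisibility facts, $p\mid n$ and $a\mid\frac{n}{p}$, from the single equality $l_2p=n$. The rest is bookkeeping with the lcm conventions in the STP definition. Beyond that, one only needs to check that the third-mode sizes $l_3$ and $l_4$ indeed reduce to $r$, so that they impose no constraint.
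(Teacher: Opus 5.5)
Your proposal is correct and follows essentially the same route as the paper. Both arguments match the mode sizes of $\mathcal{A}\ltimes X\ltimes\mathcal{B}$ with those of $\mathcal{C}$, use $l_2p=n$ together with the lcm bounds to force $l_1=n$, and then read off $p=\frac{bn}{ak}$, $\frac{k}{b}=\frac{l_2}{a}\in\mathbb{N}$ and $\frac{n}{a}=p\,\frac{k}{b}$. Your explicit chain $l_2p\ge l_1\ge n$ is a cleaner justification of the step the paper asserts as $\operatorname{lcm}\{\frac{akl_1}{bn},a\}=\frac{akl_1}{bn}$.
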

\begin{proof}
Let $X \in \mathbb{C}^{p}$ satisfy Eq.(\ref{main equation}).  Then by Eq.(\ref{main equation}) dimension consistency implies
\[
\frac{m l_2 p}{n}=h, \qquad
\frac{b l_2}{a}=k, \qquad
l_4=r.
\]
Hence, $p=\frac{bn}{ak}
\quad \text{and} \quad
\frac{k}{b}=\frac{l_2}{a},$ which requires $\frac{k}{b}\in\mathbb{N}$.
Moreover, $l_2=\frac{ak}{b}
=\operatorname{lcm}\left\{\frac{ak l_1}{bn},\, a\right\}
=\frac{ak l_1}{bn},$ yielding $l_1=n$. Therefore, $\frac{n}{a}=p\,\frac{k}{b}\in\mathbb{N}.$
\end{proof}
Lemma~\ref{m=h vector case} provides dimensional requirements that must be satisfied for Eq.(\ref{main equation}) to admit a solution. These constraints are known as \emph{admissibility conditions}; when they are fulfilled, the tensors $\mathcal{A}$, $\mathcal{B}$, and $\mathcal{C}$ are considered to be admissible with one another.

 %%%%%%%%%%%%%%%%%%%%%%%%%%%%%%%%%%%%%%%%%%%%%%%%%%%%%%%%%%%%%%%%%%%%%%%%%%%%%%%%%%%%%%%%
 \begin{definition}(Column Stacking \cite{D.cheng2012STPofmatrices})
Let $A=(a_{ij})\in \mathbb{C}^{m \times n}.$ The column stacking form of $A,$ denoted by ${V}_c(A),$ is defined as 
$$ {V}_c(A)= (a_{11},\ldots,a_{m1},a_{12},\ldots,a_{m2},\ldots,a_{1n},\ldots,a_{mn})^T.$$
\end{definition}
\begin{definition}(Lateral form of tensor)
Let $\mathcal{A}\in \mathbb{C}^{n_1\times n_2 \times n_3}.$ Then the lateral form of $\mathcal{A},$ denoted by $\mathcal{V}_L(\mathcal{A}),$ is defined as
$$\mathcal{V}_L(\mathcal{A}):=[{V}_c(\mathcal{A}(:1:)), {V}_c(\mathcal{A}(:2:)), \ldots,{V}_c(\mathcal{A}(:n_2:)) ]^T.$$
\end{definition}
\begin{example} Let us consider $\mathcal{A}$ as follows:
$$\begin{tabular}{cc|cc}
 \hline
\multicolumn{2}{c}{${{\mathcal{A}}}(:,:,1)$} & 
\multicolumn{2}{c}{${\mathcal{A}}(:,:,2)$}  \\
\hline
 1&2&5&6 \\
 3&4&7&8\\
 \hline
 \end{tabular}, $$ then the lateral form of $\mathcal{A}$ is $\mathcal{V}_L(\mathcal{A})=[1,3,5,7,2,4,6,8]^T.$ 
\end{example}
We now establish necessary and sufficient criteria for the existence of solutions to Eq.(\ref{main equation}) under the assumption $h=m$. These conditions are obtained by analyzing the block structure naturally arising from the semi-tensor product. Let $\mathcal{A}_j$ represent the $j^{th}$ lateral slice of $\mathcal{A}$ and $\mathcal{B}_i$ the $i^{th}$ horizontal slice of $\mathcal{B}$. Decompose the tensor $\mathcal{A}$ as $\mathcal{A} = [\, \widehat{\mathcal{A}}_1 : \widehat{\mathcal{A}}_2 : \cdots : \widehat{\mathcal{A}}_p \,],$ here each sub-tensor $\widehat{\mathcal{A}}_j \in \mathbb{C}^{m \times \frac{ak}{b} \times r}$ for $j=1,\ldots,p$. Write $X=(x_1,x_2,\ldots,x_p)^{T}\in\mathbb{C}^p$. By applying the definition of the STP and using the dimensional relationships given in Lemma~\ref{m=h vector case}, Eq.~(\ref{main equation}) can be expressed in an equivalent form as follows:
 \begin{align*}
\mathcal{A} \ltimes X \ltimes \mathcal{B}
&= (\mathcal{A} \otimes I_{\frac{l_1}{n} \times \frac{l_1}{n} \times \frac{l_3}{r}}) (X \otimes I_{\frac{l_1}{p} \times \frac{l_1}{p} \times \frac{l_3}{1}}) \ltimes \mathcal{B} \\
&= [\, \widehat{\mathcal{A}}_1 : \widehat{\mathcal{A}}_2 : \cdots : \widehat{\mathcal{A}}_p \,]
   (X \otimes I_{\frac{l_1}{p} \times \frac{l_1}{p} \times r}) \ltimes \mathcal{B} \\
&= x_1 \widehat{\mathcal{A}}_1 \ltimes \mathcal{B} + x_2 \widehat{\mathcal{A}}_2 \ltimes \mathcal{B} + \cdots
   + x_p \widehat{\mathcal{A}}_p \ltimes \mathcal{B} \\
&= x_1(\widehat{\mathcal{A}}_1 \otimes I_{\frac{l_2b}{ak} \times \frac{l_2b}{ak} \times 1})
   (\mathcal{B} \otimes I_{\frac{l_2}{a} \times \frac{l_2}{a} \times 1})
   + x_2(\widehat{\mathcal{A}}_2 \otimes I_{\frac{l_2b}{ak} \times \frac{l_2b}{ak} \times 1})
   (\mathcal{B} \otimes I_{\frac{l_2}{a} \times \frac{l_2}{a} \times 1}) \\
&\quad + \cdots
   + x_p(\widehat{\mathcal{A}}_p \otimes I_{\frac{l_2b}{ak} \times \frac{l_2b}{ak} \times 1})
   (\mathcal{B} \otimes I_{\frac{l_2}{a} \times \frac{l_2}{a} \times 1}) \\
&= x_1 \widehat{\mathcal{A}}_1 (\mathcal{B} \otimes I_{\frac{k}{b} \times \frac{k}{b} \times 1})
   + x_2 \widehat{\mathcal{A}}_2 (\mathcal{B} \otimes I_{\frac{k}{b} \times \frac{k}{b} \times 1})
   + \cdots
   + x_p \widehat{\mathcal{A}}_p (\mathcal{B} \otimes I_{\frac{k}{b} \times \frac{k}{b} \times 1}) \\
&= x_1 [\mathcal{A}_1 \ \cdots \ \mathcal{A}_\frac{k}{b}](\mathcal{B}_1 \otimes I_{\frac{k}{b} \times \frac{k}{b} \times 1})
   + \cdots
   + x_1 [\mathcal{A}_{\frac{(a-1)kb}{b}+1} \ \cdots \ \mathcal{A}_{\frac{ak}{b}}]
   (\mathcal{B}_a \otimes I_{\frac{k}{b} \times \frac{k}{b} \times 1}) \\
&\quad + x_2 [\mathcal{A}_{\frac{ak}{b}+1} \ \cdots \ \mathcal{A}_{\frac{(a+1)k_b}{b}}]
   (\mathcal{B}_1 \otimes I_{\frac{k}{b} \times \frac{k}{b} \times 1})
   + \cdots \\
&\quad + x_p [\mathcal{A}_{\frac{(pa-1)k}{b}+1} \ \cdots \ \mathcal{A}_{\frac{pak}{b}}]
   (\mathcal{B}_a \otimes I_{\frac{k}{b} \times \frac{k}{b} \times 1}) \\
&= \mathcal{C}.
\end{align*}
Under this representation, Eq.(\ref{main equation}) can be rewritten as a linear combination of tensor blocks in the form
\begin{equation}\label{eq:2}
\sum_{t=1}^{p} x_t \, \widetilde{\mathcal{A}}_t = \mathcal{C},
\end{equation}
where, for each $t = 1,\ldots,p$, the tensor $\tilde{\mathcal{A}}_t$ is defined by
\[
\widetilde{\mathcal{A}}_t
=
\sum_{l=1}^{a}
\big[
\mathcal{A}_{\frac{(t-1)a k}{b}+(l-1)\frac{k}{b}+1}
\ \cdots \
\mathcal{A}_{\frac{(t-1)a k}{b}+l\frac{k}{b}}
\big]
(\mathcal{B}_l \otimes I_{\frac{k}{b}}), \quad (l=1,2,\ldots,a).
\]

After applying the lateral form of tensor $\mathcal{V}_L(\cdot)$, the existence of a solution to Eq.(\ref{main equation}) in the case $h=m$ is reduced to determining whether a corresponding linear system is consistent:
\[
\sum_{t=1}^{p} x_t \, \mathcal{V}_L(\widetilde{\mathcal{A}}_t)
=
\begin{bmatrix}
\mathcal{V}_L(\widetilde{\mathcal{A}}_1) &
\mathcal{V}_L(\widetilde{\mathcal{A}}_2) &
\cdots &
\mathcal{V}_L(\widetilde{\mathcal{A}}_p)
\end{bmatrix}
X
=
\mathcal{V}_L(\mathcal{C}).
\]
%%%%%%%%%%%%%%%%%%%%%%%%%%%%%%%%%%%%%%%%%%%%%%%%%%%%%%%%%%%%%%%%%%%
The following result follows from the above discussion.
\begin{theorem} \label{m=h vector thorem}
In the case $m = h$, the tensor vector Eq.(\ref{main equation}) admits a solution precisely when the collection of vectors $\mathcal{V}_L(\widetilde{\mathcal{A}}_t)$ for $t=1,2,\ldots ,p$ together with $\mathcal{V}_L(\mathcal{C})$ are linearly independent. Moreover, if the vectors $\mathcal{V}_L(\widetilde{\mathcal{A}}_t)$ for $t=1,2,\ldots, p$
form a linearly independent set, then the corresponding solution is determined uniquely.
\end{theorem}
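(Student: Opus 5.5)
The plan is to reduce the tensor equation, via the block decomposition derived just before the statement, to a single ordinary linear system, and then apply the Rouch\'e--Capelli (Kronecker--Capelli) criterion together with standard uniqueness facts. Throughout we assume the admissibility conditions of Lemma~\ref{m=h vector case}, namely $p=\frac{bn}{ak}$, $l_1=n$, $l_2=\frac{ak}{b}$ and $l_4=r$, so that every size in the chain of equalities above is well defined.

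\emph{Step 1: reduce to Eq.~(\ref{eq:2}).} Using $l_1=n$, the factor $\mathcal{A}\otimes I$ in the first STP is trivial. Splitting $\mathcal{A}$ laterally into the blocks $\widehat{\mathcal{A}}_t$ and noting that $X\otimes I_{\frac{l_1}{p}\times\frac{l_1}{p}\times r}$ is a block column whose $t$-th block is $x_t$ times an identity tensor, we get $\mathcal{A}\ltimes X=\sum_t x_t\widehat{\mathcal{A}}_t$. Since $l_2=\frac{ak}{b}$ equals the lateral size of $\widehat{\mathcal{A}}_t$, the second STP becomes an ordinary t-product with $\mathcal{B}\otimes I_{\frac{k}{b}}$, and $\mathcal{B}\otimes I_{\frac{k}{b}}$ acts as a linear map. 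Splitting it into its horizontal slices $\mathcal{B}_l$ then gives exactly the expression for $\widetilde{\mathcal{A}}_t$. Hence $X$ solves Eq.~(\ref{main equation}) if and only if $\sum_{t}x_t\widetilde{\mathcal{A}}_t=\mathcal{C}$. The only facts needed are bilinearity of the t-product and the identity property of $\mathcal{I}$; I would state these explicitly rather than re-grind the displayed computation.

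\emph{Step 2: vectorize.} The map $\mathcal{V}_L$ is linear and bijective from $\mathbb{C}^{h\times k\times r}$ onto $\mathbb{C}^{hkr}$, since it merely reorders entries. Therefore the tensor identity in Step~1 is equivalent to $MX=\mathcal{V}_L(\mathcal{C})$, where $M=[\mathcal{V}_L(\widetilde{\mathcal{A}}_1)\ \cdots\ \mathcal{V}_L(\widetilde{\mathcal{A}}_p)]$.

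\emph{Step 3: solvability and uniqueness.} By Rouch\'e--Capelli, the system $MX=\mathcal{V}_L(\mathcal{C})$ is consistent if and only if $\rho(M)=\rho([M\ \ \mathcal{V}_L(\mathcal{C})])$. Equivalently, $\mathcal{V}_L(\mathcal{C})$ lies in the span of the vectors $\mathcal{V}_L(\widetilde{\mathcal{A}}_t)$. This is the precise content of the first claim. As written, the word ``independent'' in the statement must be read as ``$\mathcal{V}_L(\mathcal{C})$ is linearly dependent on the $\mathcal{V}_L(\widetilde{\mathcal{A}}_t)$'': if the augmented family were genuinely independent, $\mathcal{V}_L(\mathcal{C})$ would lie outside the column space and no solution could exist. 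I would state the rank formulation explicitly in the proof. For uniqueness, if the columns of $M$ are linearly independent then $\ker M=\{0\}$. Two solutions $X,X'$ satisfy $M(X-X')=0$, hence $X=X'$.

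The main obstacle is Step~1: checking carefully that the STP paddings collapse as claimed and that the index ranges in $\widetilde{\mathcal{A}}_t$ match the lateral blocks. This requires $\frac{k}{b}\mid\frac{n}{a}$ and the block widths $\frac{k}{b}$ to tile $\widehat{\mathcal{A}}_t$ into $a$ pieces. I would verify this on the frontal slices via $\operatorname{bcirc}$, where the third-mode identity padding is trivial because $l_3=l_4=r$. Steps~2 and~3 are then routine linear algebra.
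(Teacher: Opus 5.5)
Your proposal is correct and takes essentially the same route as the paper. The paper's argument is the expansion given just before the statement, which rewrites the STP equation as $\sum_t x_t\widetilde{\mathcal{A}}_t=\mathcal{C}$ and vectorizes it with $\mathcal{V}_L$, and you make the final linear-algebra step explicit via Rouch\'e--Capelli and the trivial-kernel uniqueness argument. You are also right to read ``linearly independent'' as ``$\mathcal{V}_L(\mathcal{C})$ lies in the span of the $\mathcal{V}_L(\widetilde{\mathcal{A}}_t)$''; that is the correct version of the claim and agrees with the paper's rank corollary and its examples.
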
 
The following result on rank of tensor follows from the above Theorem \ref{m=h vector thorem}.
\begin{corollary} 
For the case $m=h$, the tensor vector Eq.(\ref{main equation}) admits a solution if and only if
\[
\rho
\big[
\mathcal{V}_L(\widetilde{\mathcal{A}}_1)\;
\mathcal{V}_L(\widetilde{\mathcal{A}}_2)\;
\cdots\;
\mathcal{V}_L(\widetilde{\mathcal{A}}_p)
\big]
=
\rho
\big[
\mathcal{V}_L(\widetilde{\mathcal{A}}_1)\;
\mathcal{V}_L(\widetilde{\mathcal{A}}_2)\;
\cdots\;
\mathcal{V}_L(\widetilde{\mathcal{A}}_p)\;
\mathcal{V}_L(\mathcal{C})
\big].
\]
\end{corollary}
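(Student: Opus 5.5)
The corollary follows from the reduction carried out before Theorem~\ref{m=h vector thorem} together with the Rouché--Capelli (Kronecker--Capelli) criterion for linear systems. Put
\[
M=\big[\mathcal{V}_L(\widetilde{\mathcal{A}}_1)\;\cdots\;\mathcal{V}_L(\widetilde{\mathcal{A}}_p)\big], \qquad c=\mathcal{V}_L(\mathcal{C}).
\]
The plan is to show first that, under the admissibility conditions of Lemma~\ref{m=h vector case}, Eq.(\ref{main equation}) with $h=m$ is equivalent to the linear system $MX=c$. Then solvability is equivalent to $c\in\operatorname{span}\{\mathcal{V}_L(\widetilde{\mathcal{A}}_t)\}_{t=1}^p$, i.e.\ to $\rho(M)=\rho[M\;c]$.

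\textbf{Step 1: dimensional necessity.} If the admissibility conditions of Lemma~\ref{m=h vector case} fail, the equation has no solution of size $p$. The rank statement is then read under the standing assumption that the tensors are admissible, so that $M$ and $c$ are defined and have the same number of rows, namely $mkr$.

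\textbf{Step 2: equivalence with (\ref{eq:2}).} Here I use the block expansion displayed before (\ref{eq:2}). The STP is bilinear, and the t-product distributes over the column-block splitting $\mathcal{A}=[\widehat{\mathcal{A}}_1:\cdots:\widehat{\mathcal{A}}_p]$. Hence $\mathcal{A}\ltimes X\ltimes\mathcal{B}=\sum_t x_t\widetilde{\mathcal{A}}_t$ holds identically in $X\in\mathbb{C}^p$. This uses $l_1=n$, $l_3=l_4=r$ and $l_2=ak/b$, which are exactly the relations extracted in the proof of Lemma~\ref{m=h vector case}; they make the Kronecker paddings in the first factor trivial.

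\textbf{Step 3: linearization.} The map $\mathcal{V}_L$ is linear and injective: it is a fixed permutation of the entries, and each entry of the tensor appears exactly once. Therefore $\sum_t x_t\widetilde{\mathcal{A}}_t=\mathcal{C}$ holds if and only if $\sum_t x_t\mathcal{V}_L(\widetilde{\mathcal{A}}_t)=\mathcal{V}_L(\mathcal{C})$, that is, if and only if $MX=c$.

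\textbf{Step 4: rank criterion.} Apply Rouché--Capelli. If $MX=c$ has a solution, then $c$ is a linear combination of the columns of $M$, so appending $c$ does not raise the rank. Conversely, if $\rho(M)=\rho[M\;c]$, then $c$ lies in the column space of $M$. This gives coefficients $x_t$, and by Steps 2 and 3 these coefficients solve Eq.(\ref{main equation}).

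This argument also corrects the wording of Theorem~\ref{m=h vector thorem}. The condition there should read ``$\mathcal{V}_L(\mathcal{C})$ is linearly \emph{dependent} on the $\mathcal{V}_L(\widetilde{\mathcal{A}}_t)$,'' and the rank equality expresses precisely that.

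\textbf{Main obstacle.} The main difficulty is Step 2, namely verifying rigorously that the t-product of the column-blocked tensor with $X\otimes I_{\frac{l_1}{p}\times\frac{l_1}{p}\times r}$ really gives $\sum_t x_t\widehat{\mathcal{A}}_t$. One point needs care: the identity tensor $I_{\cdot\times\cdot\times r}$ has zero higher frontal slices, so its block-circulant matrix is an identity. I would check this through $\operatorname{bcirc}$: $\operatorname{bcirc}(X\otimes\mathcal{I})=X\otimes I$ up to a permutation, and the product then splits column-blockwise. Everything after that is standard linear algebra.
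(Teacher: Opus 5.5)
Your proposal is correct and follows the paper's route. The paper likewise obtains the corollary directly from the reduction of the equation to the linear system $[\mathcal{V}_L(\widetilde{\mathcal{A}}_1)\;\cdots\;\mathcal{V}_L(\widetilde{\mathcal{A}}_p)]\,X=\mathcal{V}_L(\mathcal{C})$ (your Steps 2--3), followed by the standard rank criterion (your Step 4); you are also right that the theorem's ``linearly independent'' must be read as ``$\mathcal{V}_L(\mathcal{C})$ lies in the span of the $\mathcal{V}_L(\widetilde{\mathcal{A}}_t)$,'' which is what the rank equality expresses and what the paper's examples actually use.
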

Next, we provide several examples illustrating the above Lemma \ref{m=h vector case} and Theorem \ref{m=h vector thorem}.
\begin{example}\label{example}
Consider the tensors $\mathcal{A} \in \mathbb{C}^{3 \times 6 \times 2}$, 
$\mathcal{B} \in \mathbb{C}^{1 \times 2 \times 2}$ and 
$\mathcal{C} \in \mathbb{C}^{3 \times 6 \times 2}$ as follows: \begin{center}      
\begin{tabular}{c c c c c c| c c c c c c }
\hline
  \multicolumn{6}{c}{${{\mathcal{A}}}(:,:,1)$} & 
  \multicolumn{6}{c}{${\mathcal{A}}(:,:,2)$}  \\
\hline
1 & 0&2&3&1&1&1&1&0&0&1&1\\
2 & 1&0&0&0&1&2&0&1&3&0&0\\
1&0&1&0&1&0&1&0&1&1&1&1 
\end{tabular}, \ 
\begin{tabular}{cccccc|cccccc}
    \hline
  \multicolumn{6}{c}{${{\mathcal{C}}}(:,:,1)$} & 
  \multicolumn{6}{c}{${\mathcal{C}}(:,:,2)$}  \\
\hline
      4&1&3&9&4&7& 1&2&1&6&5&5\\
      2&1&1&9&2&3& 5&0&1&12&1&3\\
      1&1&1&4&3&4& 2&1&2&5&1&5
    \end{tabular}
\ \text{and}
\end{center}
$\begin{tabular}{c c|c c}
\hline
  \multicolumn{2}{c}{${{\mathcal{B}}}(:,:,1)$} & 
  \multicolumn{2}{c}{${\mathcal{B}}(:,:,2)$}  \\
\hline
     1&2&0&1  \\
\end{tabular} .$ Then the solution $X \in \mathbb{C}^2$ , and 
$\widehat{\mathcal{A}}_1\in\mathbb{C}^{3\times 3 \times 2}$ and $\widehat{\mathcal{A}}_2\in\mathbb{C}^{3\times 3 \times 2}$ be defined as follows:     \begin{center}      \begin{tabular}{c c c | c c c }
\hline
  \multicolumn{3}{c}{${\widehat{\mathcal{A}}_1}(:,:,1)$} & 
  \multicolumn{3}{c}{$\widehat{\mathcal{A}}_1(:,:,2)$}  \\
\hline
1&0&2&1&1&0\\
2&1&0&2&0&1\\
1&0&1&1&0&1\\
\end{tabular},
\quad
\begin{tabular}{c c c | c c c }
\hline
  \multicolumn{3}{c}{${\widehat{\mathcal{A}}_2}(:,:,1)$} & 
  \multicolumn{3}{c}{$\widehat{\mathcal{A}}_2(:,:,2)$}  \\
\hline
3&1&1&0&1&1\\
0&0&1&3&0&0\\
0&1&0&1&1&1\\
\end{tabular}.
\end{center}
It follows that $\mathcal{V}_L(\widetilde{\mathcal{A}}_1), \mathcal{V}_L(\widetilde{\mathcal{A}}_2)$ form a linearly independent set and that  $\mathcal{V}_L(\mathcal{C})$ lies in their linear span. As a result, the unique solvability condition in Theorem~\ref{m=h vector thorem} holds, implying tensor compatibility. By straightforward calculation, we obtain $X=[1, 1]^T$ as the solution.\\
 \end{example}
 \vspace{-3em}
\begin{example}
  Let $\mathcal{A,\ B} $ considered as in Example (\ref{example}) and take $\mathcal{C}$ as follows
  \begin{center}
    \begin{tabular}{cccccc|cccccc}
    \hline
  \multicolumn{6}{c}{${{\mathcal{C}}}(:,:,1)$} & 
  \multicolumn{6}{c}{${\mathcal{C}}(:,:,2)$}  \\
\hline
      4&1&3&9&4&7& 1&2&1&6&5&5\\
      2&1&1&9&2&3& 5&1&1&12&1&3\\
      1&1&1&4&3&4& 2&1&2&5&1&5

    \end{tabular}.
    \end{center}
It follows that the linear independence of $\mathcal{V}_L(\widetilde{\mathcal{A}}_1)$ and $\mathcal{V}_L(\widetilde{\mathcal{A}}_2)$ from $\mathcal{V}_L(\mathcal{C})$ implies that Eq.~(\ref{main equation}) admits no solution.
\\
\end{example}
\vspace{-3em}
\begin{example}
Take the tensors $\mathcal{A,\ B,\ C}$ as follows
    $$\begin{tabular}{cccc|cccc}
    \hline
  \multicolumn{4}{c}{${{\mathcal{A}}}(:,:,1)$} & 
  \multicolumn{4}{c}{${\mathcal{A}}(:,:,2)$}  \\
\hline
    1&0&1&0&0&1&0&\\
    0&1&0&1&1&0&1&0
    \end{tabular} \ , \
    \begin{tabular}{cc|cc}
    \hline
  \multicolumn{2}{c}{${{\mathcal{B}}}(:,:,1)$} & 
  \multicolumn{2}{c}{${\mathcal{B}}(:,:,2)$}  \\
\hline
   1&-1&0&-1
    \end{tabular} 
   \  \text{and}\
    \begin{tabular}{cccc|cccc}
    \hline
  \multicolumn{4}{c}{${{\mathcal{C}}}(:,:,1)$} & 
  \multicolumn{4}{c}{${\mathcal{C}}(:,:,2)$}  \\
\hline
    2&0&-2&2&0&2&2&-2\\
     0&2&2&-2&2&0&-2&2
    \end{tabular}.$$
Then the solution $X \in \mathbb{C}^2,$ and \ 
\begin{center}
\begin{tabular}{c c|c c}
    \hline
  \multicolumn{2}{c}{${\widehat{{\mathcal{A}}}_{1}(:,:,1)}$} & 
  \multicolumn{2}{c}{${\widehat{{\mathcal{A}}}_{1}(:,:,2)}$ } \\
\hline
   1&0&0&1\\
   0&1&1&0
    \end{tabular}, 
    \begin{tabular}{c c|c c}
    \hline
  \multicolumn{2}{c}{$\widehat{{\mathcal{A}}}_{2}(:,:,1)$} & 
  \multicolumn{2}{c}{$\widehat{\mathcal{A}}_{2}(:,:,2)$}  \\
\hline
   1&0&0&1\\
   0&1&1&0
    \end{tabular}.
\end{center}
The linear dependence of  $\mathcal{V}_L(\widetilde{\mathcal{A}}_1),\ \mathcal{V}_L(\widetilde{\mathcal{A}}_2)$ and $\mathcal{V}_L({\mathcal{C}})$  implies that Eq.(\ref{main equation}) has infinitely many solutions, which are parameterized by $X=[1+t,1-t]^T$, where $t \in \mathbb{C}.$
\end{example}
%%%%%%%%%%%%%%%%%%%%%%%%%%%%%%%%%%%%%%%%%%%%%%%%%%%%%%%%%%%%%%%%%%%%%%%%%%%
\subsection{The case corresponds to  \texorpdfstring{$h \neq m$}{m = h}}
We now investigate the compatibility of Eq.(\ref{main equation}) when $h\neq m$. We present a lemma that provides necessary conditions on the tensor dimensions for the Eq.(\ref{main equation}) to admit a solution, along with constraints on the size of the solution vector $X \in \mathbb{C}^p$.
\begin{lemma} \label{m neq h vector case}
The existence of a solution vector of size $p$ for Eq.(\ref{main equation}) with $h\neq m$ imposes the following necessary dimensional conditions on $\mathcal{A}$, $\mathcal{B}$, $\mathcal{C}$ and $p$:\vspace{-0.5em}
\begin{enumerate}[label=(\roman*)]
\item $\frac{h}{m}$ and $\frac{k}{b}$ must belong to $\mathbb{N}$;
\vspace{-0.7em}
\item $\alpha = \operatorname{gcd}\{\frac{h}{m}, a\}$,
      $\gcd\{\frac{k}{b}, \alpha\}=1$
      and $\gcd\{\frac{h}{m}, \frac{k}{b}\}=1$;
      \vspace{-0.7em}
\item $p = \frac{nhb}{mak}$.
\end{enumerate}
\end{lemma}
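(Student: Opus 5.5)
The plan mirrors the proof of Lemma~\ref{m=h vector case}: read off the dimensions of $\mathcal{A}\ltimes X\ltimes\mathcal{B}$ from Eq.~(\ref{main equation}) and equate them with those of $\mathcal{C}$. Suppose $X\in\mathbb{C}^p$ is a solution. Comparing the three modes of the left-hand side, of size $\frac{ml_2p}{n}\times\frac{bl_2}{a}\times l_4$, with $h\times k\times r$ gives
\[
\frac{m l_2 p}{n}=h,\qquad \frac{b l_2}{a}=k,\qquad l_4=r .
\]
The third equation is automatic, since $l_3=r$ and hence $l_4=r$. The second gives $\frac{l_2}{a}=\frac{k}{b}$. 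Because $a\mid l_2=\operatorname{lcm}\{\frac{l_1}{p},a\}$, this forces $\frac{k}{b}\in\mathbb{N}$. Substituting $l_2=\frac{ak}{b}$ into the first equation yields $p=\frac{nhb}{mak}$, which is (iii).

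The substantive work is to turn $l_2=\frac{ak}{b}$ into the arithmetic conditions (i) and (ii). First I will write $q=\frac{l_1}{p}$, an integer, so that $l_2=\operatorname{lcm}\{q,a\}$. Since $l_1=\operatorname{lcm}\{n,p\}$, we have $\frac{l_1}{p}=\frac{n}{\gcd(n,p)}$. Using $\frac{ml_2p}{n}=h$ together with $l_1=\frac{np}{\gcd(n,p)}$, I get
\[
\frac{h}{m}=\frac{l_2\,p}{n}=\frac{l_2}{q}\cdot\frac{l_1}{n}.
\]
Both factors are integers: $q\mid l_2$ because $l_2$ is an lcm of $q$, and $n\mid l_1$. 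Hence $\frac{h}{m}\in\mathbb{N}$, completing (i).

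Next set $g=\gcd(q,a)$, so that $l_2=\frac{qa}{g}$. Then $\frac{k}{b}=\frac{l_2}{a}=\frac{q}{g}$ and $\frac{l_2}{q}=\frac{a}{g}$. Also $\frac{l_1}{n}=\frac{p}{\gcd(n,p)}$ is coprime to $\frac{n}{\gcd(n,p)}=q$. Both coprimality claims in (ii) come from the fact that $\frac{q}{g}$ and $\frac{a}{g}$ are coprime.

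\begin{itemize}
\item For $\gcd\{\frac{h}{m},\frac{k}{b}\}=1$: we have $\frac{h}{m}=\frac{a}{g}\cdot\frac{l_1}{n}$ and $\frac{k}{b}=\frac{q}{g}$. The factor $\frac{a}{g}$ is coprime to $\frac{q}{g}$, and $\frac{l_1}{n}$ is coprime to $q$, hence to $\frac{q}{g}$. So the product is coprime to $\frac{k}{b}$.
\item For the $\alpha$ condition: $\alpha=\gcd\{\frac{h}{m},a\}$ divides $\frac{h}{m}$, and $\frac{h}{m}$ is coprime to $\frac{k}{b}$, so $\gcd\{\frac{k}{b},\alpha\}=1$.
\end{itemize}

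The main obstacle is this number-theoretic step. The delicate points are verifying that $\frac{l_1}{n}$ is coprime to $q$, and keeping the bookkeeping of $p$, $q$ and $l_1$ consistent with how the paper labels the intermediate dimension $\frac{l_1}{p}$. I would first check these identities on a small admissible example before writing the general argument. Unlike in Lemma~\ref{m=h vector case}, I will not assume $l_1=n$, since $h\neq m$ allows $\frac{l_1}{n}>1$.
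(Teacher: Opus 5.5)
Your argument is correct and essentially matches the paper's. Both match the three modes, factor $\frac{h}{m}=\frac{l_2}{l_1/p}\cdot\frac{l_1}{n}$, and use that the two cofactors of an lcm are coprime, both for $l_2=\operatorname{lcm}\{\frac{l_1}{p},a\}$ and for $l_1=\operatorname{lcm}\{n,p\}$. The only difference is that the paper defines $\alpha$ as the ratio $\frac{l_2}{l_1/p}$ (your $\frac{a}{g}$) and proves it equals $\gcd\{\frac{h}{m},a\}$, which the later Toeplitz theorem relies on; this follows from your facts, since $\gcd\{\frac{h}{m},a\}=\frac{a}{g}\gcd\{\frac{l_1}{n},g\}=\frac{a}{g}$ because $g\mid q$ and $\gcd\{\frac{l_1}{n},q\}=1$.
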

\begin{proof}
Let $X \in \mathbb{C}^{p}$ satisfy Eq.(\ref{main equation}).  Then by Eq.(\ref{main equation}) dimension consistency implies
$\frac{m l_2 p}{n} = h,
\quad
\frac{b l_2}{a} = k, \quad l_4=r,$
It follows that
 $\frac{k}{b}\in \mathbb{N}$  and $p = \frac{n h b}{m a k}.$ Define $\frac{l_2}{l_1/p} = \frac{n h}{m l_1} = \alpha,$ then $\alpha\in \mathbb{Z}$ . This together with $n \mid l_1$ implies that
$\frac{h}{m}\in \mathbb{N}$  and $\alpha \mid \frac{h}{m}$.
Because
\[
l_2 = \frac{a k}{b}
= \operatorname{lcm}\left\{\frac{l_1}{p}, a\right\},
\quad
\frac{l_2}{l_1/p} = \alpha,
\quad
\frac{l_2}{a} = \frac{k}{b},
\]
then $\gcd\left\{\alpha, \frac{k}{b}\right\} = 1$. Moreover, \vspace{-0.5em}
\[
l_1 = \operatorname{lcm}\{n,p\},
\qquad
\frac{l_1}{n} = \frac{h}{m\alpha},
\qquad
\frac{l_1}{p} = \frac{ak}{b\alpha},
\]
thus $\gcd\left\{\frac{h}{m\alpha}, \frac{ak}{b\alpha}\right\} = 1.$ We emphasize that $\gcd\left\{\alpha, \frac{k}{b}\right\} = 1$, then $\alpha \mid a$,
this together with $\alpha \mid (\frac{h}{m})$ gives
$\alpha = \gcd\{\frac{h}{m}, a\}
\  \text{and} \ 
\gcd\{\frac{h}{m}, \frac{k}{b}\} = 1.$
\end{proof}
Lemma~\ref{m neq h vector case} provides dimensional requirements that must be satisfied for Eq.(\ref{main equation}) to admit a solution. These constraints are known as \emph{admissibility conditions}; when they are fulfilled, the tensors $\mathcal{A}$, $\mathcal{B}$, and $\mathcal{C}$ are considered to be admissible with one another. We next validate the theory with various examples.
\begin{example} \label{example2} Consider the tensors $\mathcal{A}\in \mathbb{C}^{2 \times 3\times 2},$ $\mathcal{B}\in \mathbb{C}^{1 \times 2 \times 2}$ and $\mathcal{C}\in \mathbb{C}^{4 \times 6\times 2}$
\begin{center}
    \begin{tabular}{ccc|ccc}
    \hline
  \multicolumn{3}{c}{${{\mathcal{A}}}(:,:,1)$} & 
  \multicolumn{3}{c}{${\mathcal{A}}(:,:,2)$}  \\
\hline
 1 & -1&2&2&1&2\\
0 & 1&-1& -1&2&1
 \end{tabular}\ , \ 
 \begin{tabular}{cc|cc}
    \hline
  \multicolumn{2}{c}{${{\mathcal{B}}}(:,:,1)$} & 
  \multicolumn{2}{c}{${\mathcal{B}}(:,:,2)$}  \\
\hline
 3&2&0&1
 \end{tabular}
\end{center}
    \begin{center}
     \begin{tabular}{cccccc|cccccc}
    \hline
  \multicolumn{6}{c}{${{\mathcal{C}}}(:,:,1)$} & 
  \multicolumn{6}{c}{${\mathcal{C}}(:,:,2)$}  \\
\hline
 3&12&-3&4&12&-1&6&12&3&5&12&1\\
 -6&3&12&-2&4&12& 6&6&12&6&5&12\\
  0&-6&3&-1&-2&4&-3&6&6&-2&2&5\\
  6&0&-6&8&-1&-2& 12&-3&6&10&-2&2
 \end{tabular}.  
\end{center}
The compatibility conditions follow from Lemma~\ref{m neq h vector case}, and by straightforward calculation, we obtain $X=[1,2]^T$ can be verified to solve Eq.(\ref{main equation}).

\end{example}
\begin{example}
Let $\mathcal{A}$,\ $\mathcal{B}$ considered as in Example \ref{example2} and take $\mathcal{C}$ as follows
    \begin{center}
     \begin{tabular}{cccccc|cccccc}
    \hline
  \multicolumn{6}{c}{${{\mathcal{C}}}(:,:,1)$} & 
  \multicolumn{6}{c}{${\mathcal{C}}(:,:,2)$}  \\
\hline
 3&12&-3&4&12&-1&6&12&3&5&12&1\\
 -6&3&12&-2&4&12& 6&6&12&6&5&12\\
  1&-6&3&-1&-2&4&-3&6&6&-2&2&5\\
  6&1&-6&8&-1&-2& 12&-3&6&10&-2&2
 \end{tabular} . 
\end{center}
The absence of a solution despite compatibility of the tensors demonstrates that the compatibility conditions constitute necessary, rather than sufficient, criteria.
\end{example}
\begin{remark}
Let $\mathcal{A \ltimes X \ltimes B = C}$ admits vector solution with the case $ m=h,\ m\neq h$ under the combatilbility condition. If $\frac{ml_1}{n} = \frac{ml_{2}p}{n}=h,\ \frac{l_1}{p}=\frac{bl_2}{a}=k$ and $\mathcal{B}$ is an identity tensor then $\mathcal{A \ltimes X \ltimes B= C}$ reduces to $\mathcal{A \ltimes X = C.}$
\end{remark}
\begin{definition}\label{toeplitz def}
(Toeplitz Tensor \cite{J.FathitensorequationAX=BunderSTP}) A third-order tensor $\mathcal{A} = (a_{i_1,i_2,i_3}) \in \mathbb{C}^{n_1 \times n_2 \times n_3}$ is called a Toeplitz tensor if
\vspace{-0.8em}
\[
a_{i_1,i_2,i_3} = g_{\,i_1 - i_2,\, i_3},
\qquad
1 \le i_1 \le n_1,\;
1 \le i_2 \le n_2,\;
1 \le i_3 \le n_3,
\]
where $\mathcal{G} = (g_{k_1,k_2}) \in \mathbb{C}^{(n_1+n_2-1) \times n_3} \ \text{for}
\ 1-n_2 \le k_1 \le n_1-1  \ \text{and}\ 1 \le k_2 \le n_3.$ $\mathcal{A} := \operatorname{toep}(\mathcal{G}),$
is referred as third order Toeplitz tensor
whenever all of its frontal slices are Toeplitz matrices.
\end{definition}
Under the STP, the resulting tensor exhibits a block Toeplitz structure, which follows directly from the STP of two tensors.\\
We now characterize the explicit structure of $\mathcal{C}$ by exploiting the properties of the semi-tensor product and the associated dimensional compatibility conditions.
\begin{theorem} \label{toeplitz theorem}
Suppose that the tensor vector Eq.~\eqref{main equation} is solvable. Then the tensor
$\mathcal{C}$ must exhibit a block Toeplitz structure. More precisely, $\mathcal{C}$ admits the block
representation
\begin{equation}
\mathcal{C}=
\begin{bmatrix}
\operatorname{Block}_{1,1}(\mathcal{C}) & \operatorname{Block}_{1,2}(\mathcal{C}) & \cdots & \operatorname{Block}_{1,b}(\mathcal{C})\\
\operatorname{Block}_{2,1}(\mathcal{C}) & \operatorname{Block}_{2,2}(\mathcal{C}) & \cdots & \operatorname{Block}_{2,b}(\mathcal{C})\\
\vdots & \vdots & \ddots & \vdots\\
\operatorname{Block}_{m,1}(\mathcal{C}) & \operatorname{Block}_{m,2}(\mathcal{C}) & \cdots & \operatorname{Block}_{m,b}(\mathcal{C})
\end{bmatrix},
\end{equation} \nonumber
here each $\operatorname{Block}_{t,s}(\mathcal{C})$ ($t=1,\ldots,m$, $s=1,\ldots,b$) is a Toeplitz tensor of identical size i.e. all blocks in
$\mathbb{C}^{\frac{m}{h} \times \frac{k}{b} \times r}$.
\end{theorem}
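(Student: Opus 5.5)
Since Eq.~\eqref{main equation} is assumed solvable, I will take a solution $X\in\mathbb{C}^p$ and compute $\mathcal{A}\ltimes X\ltimes\mathcal{B}$ explicitly, reading off the block structure of $\mathcal{C}$ from the resulting formula. The first step is to invoke Lemma~\ref{m=h vector case} (or Lemma~\ref{m neq h vector case} in the case $h\neq m$) to fix the dimensions. These give $p=\frac{nhb}{mak}$, $\frac{k}{b}\in\mathbb{N}$, $l_4=r$, and the integer ratios $\frac{l_1}{n}$, $\frac{l_1}{p}$, $\frac{l_2}{a}=\frac{k}{b}$. With these, all the identity tensors appearing in the STP have third mode equal to $1$, up to the trivial $r$-fold. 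Each inflation $\,\cdot\otimes I_{q\times q\times 1}$ therefore acts slice-wise as the matrix Kronecker product $(\cdot)\otimes I_q$ on every frontal slice, and the t-product reduces to block-circulant multiplication in the third mode only.

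Next I will write $\mathcal{A}\ltimes X=\sum_t x_t\widehat{\mathcal{A}}_t\otimes I_{\frac{l_1}{n}}$, mimicking the computation before Theorem~\ref{m=h vector thorem}. Multiplying on the right by $\mathcal{B}\otimes I_{\frac{k}{b}\times\frac{k}{b}\times 1}$ then expresses $\mathcal{C}$ as a t-product of the form $\mathcal{D}*(\mathcal{B}\otimes I_{k/b})$, where $\mathcal{D}$ is itself an inflated tensor, i.e.\ a Kronecker product with an identity. The key observation is the following. For any tensor $\mathcal{E}$, the tensor $\mathcal{E}\otimes I_{q\times q\times 1}$ has frontal slices $E^{(s)}\otimes I_q$. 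In the $(t,s)$ block of size $q\times q$, each such slice is $e_{ts}^{(s)}I_q$, which is Toeplitz (indeed diagonal-constant). Sums and t-products of such identity-Kronecker tensors are again of the form $\mathcal{F}\otimes I_q$ whenever the inner dimensions match, because $(F\otimes I_q)(G\otimes I_q)=FG\otimes I_q$ holds slice-wise and the circulant convolution in mode~3 commutes with $\otimes I_q$.

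Hence $\mathcal{C}=\mathcal{F}\otimes I_{q\times q\times 1}$ for some $\mathcal{F}\in\mathbb{C}^{m\times b\times r}$. Partitioning $\mathcal{C}$ into $m\times b$ blocks, block $(t,s)$ then has frontal slices $f_{ts}^{(j)}I_q$. Each of these is a Toeplitz matrix with generating vector $g_{0,j}=f^{(j)}_{ts}$ and all other diagonals $0$, so by Definition~\ref{toeplitz def} every block is a Toeplitz tensor, and all blocks share a common size. I would check that this common block size matches the one claimed in the statement (block size $\frac{k}{b}$ in mode~2 and the mode-1 quotient $h/m$), interpreting the printed $\frac{m}{h}$ as $\frac{h}{m}$.

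The main obstacle is that the two inflation factors need not coincide: $\frac{l_1}{n}$ inflates $\mathcal{A}$ while $\frac{k}{b}$ inflates $\mathcal{B}$, and in the $h\neq m$ case $\frac{l_1}{n}=\frac{h}{m\alpha}$ differs from $\frac{k}{b}$. The intermediate product is then $(\mathcal{A}'\otimes I_{u})*(\mathcal{B}\otimes I_{v})$ with $u\neq v$, which is not directly of the form $\mathcal{F}\otimes I$. My plan is to use the gcd conditions of Lemma~\ref{m neq h vector case} to factor $I_{uv'}=I_u\otimes I_{v'}$ and apply the mixed-product property. This shows that each block is a sum of Kronecker products $P\otimes I_w$ together with shift-like patterns. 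Each such block is constant along diagonals within every frontal slice, and those diagonal-constancy relations are exactly what I will have to verify, block by block, to conclude the Toeplitz property.
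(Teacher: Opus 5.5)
There is a genuine gap. Your argument breaks at the step ``Hence $\mathcal{C}=\mathcal{F}\otimes I_{q\times q\times 1}$ with $\mathcal{F}\in\mathbb{C}^{m\times b\times r}$.'' That form forces $h=mq$ and $k=bq$, i.e.\ $\frac{h}{m}=\frac{k}{b}=q$. With $h=m$, or with $\gcd\{\frac{h}{m},\frac{k}{b}\}=1$ from Lemma~\ref{m neq h vector case}, this forces $q=1$, so your planned check that ``the block size matches'' will fail in every nontrivial case.

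The blocks of $\mathcal{C}$ are genuinely rectangular $\frac{h}{m}\times\frac{k}{b}$ Toeplitz matrices, not scalar multiples of an identity. For instance, for a row $R=(a_1,a_2,a_3)$ and $Y=(x_1,x_2)^T$,
$(R\otimes I_2)(Y\otimes I_3)=\begin{bmatrix} a_1x_1 & a_3x_2 & a_2x_1\\ a_2x_2 & a_1x_1 & a_3x_2\end{bmatrix}$.
So ``Kronecker product with an identity $\Rightarrow$ diagonal-constant blocks'' is not the mechanism. Your last paragraph correctly notices that the two inflation factors differ, but it only defers the verification that constitutes the whole theorem.

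Your opening formula $\mathcal{A}\ltimes X=\sum_t x_t\widehat{\mathcal{A}}_t\otimes I_{l_1/n}$ is also wrong when $l_1\neq n$. Since $\gcd\{\frac{l_1}{n},\frac{l_1}{p}\}=1$, the width-$\frac{l_1}{p}$ column blocks of $\mathcal{A}\otimes I_{l_1/n}$ are not of the form $(\cdot)\otimes I_{l_1/n}$.

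What is missing is the reduction the paper performs and the index identity behind it. First collapse the two inflations with the mixed-product property. This works slice-wise, because every identity tensor here has a single nonzero frontal slice:
$\bigl((\mathcal{A}\otimes I_{u})*(X\otimes I_{v})\bigr)\otimes I_{w}=(\mathcal{A}\otimes I_{uw})*(X\otimes I_{vw})$, with $uw=\frac{h}{m}$ and $vw=\frac{ak}{b}$.
Also $(X\otimes I_{ak/b})*(\mathcal{B}\otimes I_{k/b})=(X\otimes\mathcal{B})\otimes I_{k/b}$. Hence $\mathcal{C}=(\mathcal{A}\otimes I_{h/m})*(\mathcal{Y}\otimes I_{k/b})$ with $\mathcal{Y}=X\otimes\mathcal{B}$.

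Now partition the rows of $\mathcal{C}$ by the rows of $\mathcal{A}$ and the columns by the columns of $\mathcal{Y}$. Each frontal slice of block $(t,s)$ is then a sum, over the circulant slice pairs, of products $(R\otimes I_{h/m})(Y\otimes I_{k/b})$, with $R$ a row vector and $Y$ a column vector. The $(i,c)$ entry of such a product is $\sum r_j y_\sigma$ over the pairs satisfying $(j-1)\frac{h}{m}-(\sigma-1)\frac{k}{b}=c-i$. This depends only on $c-i$, so each product is Toeplitz, hence each sum, hence each frontal slice of each block.

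The paper encodes the same fact through its explicit shifted patterns ${}^{P_k}\mathcal{A}^{j}$. Without this computation, or an equivalent one, your proposal does not establish the Toeplitz property.
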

\begin{proof}
For the simplicity of notation, denote $\operatorname{Row}_t(\mathcal{A})$ be the
$t^{th}$ horizontal slices of $\mathcal{A}$, and
$\operatorname{Col}_s(\mathcal{B})$ be the $s^{th}$ lateral slices $\mathcal{B}$. According to Lemma~\ref{m neq h vector case}, the following relations hold:
$\frac{h}{m}=\frac{l_1\alpha}{m},\ 
\frac{l_2}{l_1/p}=\alpha,\ \frac{l_2}{a}=\frac{k}{b}\ 
\text{and}\ 
 l_3=r.$
Now suppose that Eq.~(\ref{main equation}) admits a solution
$X=[x_1,x_2,\ldots,x_p]^T\in\mathbb{C}^p,$ under this assumption, we obtain the following:
\begin{align*}
\mathcal{A}\ltimes X\ltimes \mathcal{B}
&=(\mathcal{A}\otimes I_{\frac{h}{m \alpha}})\,(X\otimes I_{\frac{ak}{b \alpha} \times \frac{ak}{b \alpha }\times l_3})\ltimes \mathcal{B}\\
&=
\begin{bmatrix}
(\operatorname{Row}_1(\mathcal{A})\otimes I_\frac{h}{m \alpha})(X\otimes I_{\frac{ak}{b \alpha} \times \frac{ak}{b \alpha }\times l_3} )\\
(\operatorname{Row}_2(\mathcal{A})\otimes I_\frac{h}{m \alpha})(X\otimes I_{\frac{ak}{b \alpha} \times \frac{ak}{b \alpha }\times l_3})\\
\vdots\\
(\operatorname{Row}_m(\mathcal{A})\otimes I_\frac{h}{m \alpha})(X\otimes I_{\frac{ak}{b \alpha} \times \frac{ak}{b \alpha }\times l_3})
\end{bmatrix}
\ltimes \mathcal{B}\\
&=
\begin{bmatrix}
\operatorname{Row}_1(\mathcal{A})\ltimes X\\
\operatorname{Row}_2(\mathcal{A})\ltimes X\\
\vdots\\
\operatorname{Row}_m(\mathcal{A})\ltimes X
\end{bmatrix}
\otimes I_\alpha\;(\mathcal{B}\otimes I_\frac{k}{b})\\
&=
\begin{bmatrix}
(\operatorname{Row}_1(\mathcal{A})\ltimes X)\otimes I_\alpha\\
(\operatorname{Row}_2(\mathcal{A})\ltimes X)\otimes I_\alpha\\
\vdots\\
(\operatorname{Row}_m(\mathcal{A})\ltimes X)\otimes I_\alpha
\end{bmatrix} \times
\begin{bmatrix}
\operatorname{Col}_1(\mathcal{B})\otimes I_{k/b} &
\operatorname{Col}_2(\mathcal{B})\otimes I_{k/b} &
\cdots &
\operatorname{Col}_b(\mathcal{B})\otimes I_{k/b}
\end{bmatrix}
\end{align*}
\[
=
\begin{bmatrix}
\operatorname{Row}_1(\mathcal{A})\ltimes X\ltimes \operatorname{Col}_1(\mathcal{B}) &
\operatorname{Row}_1(\mathcal{A})\ltimes X\ltimes \operatorname{Col}_2(\mathcal{B}) &
\cdots &
\operatorname{Row}_1(\mathcal{A})\ltimes X\ltimes \operatorname{Col}_b(\mathcal{B})\\
\operatorname{Row}_2(\mathcal{A})\ltimes X\ltimes \operatorname{Col}_1(\mathcal{B}) &
\operatorname{Row}_2(\mathcal{A})\ltimes X\ltimes \operatorname{Col}_2(\mathcal{B}) &
\cdots &
\operatorname{Row}_2(\mathcal{A})\ltimes X\ltimes \operatorname{Col}_b(\mathcal{B})\\
\vdots & \vdots & \ddots & \vdots\\
\operatorname{Row}_m(\mathcal{A})\ltimes X\ltimes \operatorname{Col}_1(\mathcal{B}) &
\operatorname{Row}_m(\mathcal{A})\ltimes X\ltimes \operatorname{Col}_2(\mathcal{B}) &
\cdots &
\operatorname{Row}_m(\mathcal{A})\ltimes X\ltimes \operatorname{Col}_b(\mathcal{B})
\end{bmatrix}
\]
We next examine the internal organization of the individual sub-blocks.
\begin{align}\label{equation}
\operatorname{Row}_t(\mathcal{A}) \ltimes X \ltimes \operatorname{Col}_s(\mathcal{B})
&=
\Bigl[
(\operatorname{Row}_t(\mathcal{A}) \otimes I_{\frac{h}{m\alpha}})
(X \otimes I_{{\frac{ak}{b\alpha}}\times \frac{ak}{b\alpha} \times r})
\otimes I_{\alpha}
\Bigr]
(\operatorname{Col}_s(\mathcal{B}) \otimes I_{\frac{k}{b}}) \nonumber\\
&=
(\operatorname{Row}_t(\mathcal{A}) \otimes I_{\frac{h}{m}})
(X \otimes I_{{\frac{ak}{b}} \times {\frac{ak}{b}} \times r})
(\operatorname{Col}_s(\mathcal{B}) \otimes I_{\frac{k}{b}}) \nonumber\\
&=
(\operatorname{Row}_t(\mathcal{A}) \otimes I_{\frac{h}{m}})
\bigl[(X \otimes I_{a \times a \times r})\operatorname{Col}_s(\mathcal{B})\bigr]
\otimes I_{\frac{k}{b}} .
\end{align}
Set the tensor $\mathcal{Y}=X\otimes \mathcal{B}\in\mathbb{C}^{pa\times b\times r},$ and decompose it into lateral blocks as $\mathcal{Y} =[\mathcal{Y}_1,\mathcal{Y}_2,\ldots,\mathcal{Y}_b],$
where
\begin{align*}
\mathcal{Y}_s
&=(X\otimes I_{a \times a \times r})\operatorname{Col}_s(\mathcal{B})\\
&=[x_1 b_{1,s,i}, x_1 b_{2,s,i},\,\ldots, x_1 b_{a,s,i}, \
    x_2 b_{1,s,i}, \ldots,\,x_p b_{a,s,i}]^{T}\\
&=[y_{1,s,i},  y_{2,s,i},\ldots, y_{pa,s,i}]^{T}
\in\mathbb{C}^{pa \times 1\times r}.
\end{align*}
Since $\frac{h}{m}$, $\frac{k}{b}$ has no common divisor other than $1$, the quantity $\frac{k}{b}$ can be written in the form
\[
\frac{k}{b} = m_1 \frac{h}{m} + m_2,
\]
for \(t=1,2,\ldots,m\) and \(s=1,2,\ldots,b\), \(i=k=1,2\cdots ,r,\) Eq.~(\ref{equation}) leads to
\begin{align*}
&\operatorname{Row}_t(\mathcal{A})\ltimes X\ltimes \operatorname{Col}_s(\mathcal{B})\\
&~=(\operatorname{Row}_t(\mathcal{A})\otimes I_\frac{h}{m})(\mathcal{Y}_s\otimes I_\frac{k}{b})\\
&~=y_{1,s,k}{^{P_k}\mathcal{A}^{1}_{:,:,i}}+y_{2,s,k}{^{P_k}\mathcal{A}^{2}_{:,:,i}}+\cdots+ y_{\frac{h}{m},s,k} 
{^{P_k}\mathcal{A}^{\frac{h}{m}}_{:,:,i}}+y_{\frac{h}{m}+1,s,k}{^{P_k}\mathcal{A}^{\frac{h}{m}+1}_{:,:,i}}+\cdots+y_{pa,s,k}{^{P_k}\mathcal{A}^{pa}_{:,:,i}}, \\
 & \hspace{6cm}  \text{ where } P_k \mathcal{A}^{1}_{:,:,i}, \ldots, P_k \mathcal{A}^{pa}_{:,:,i} \text{ are defined below}\\
&~=Block_{ts} (\mathcal{C})\in\mathbb{C}^{\frac{h}{m}\times \frac{k}{b} \times r}.
\end{align*}It follows that each $\operatorname{Block}_{ts}(\mathcal{C})$ has a Toeplitz structure. Consequently, the existence of a solution to Eq.~\eqref{main equation} requires that the tensor $\mathcal{C}$ be block Toeplitz.\\
 
\begin{align*}
&{^{P_k}\mathcal{A}^{1}_{:,:,i}} := \left[
\begin{array}{*{14}c} 
   a_{t,1,i}& & & & & & a_{t,m_{1},i} & & & & & a_{t,(m_1+1),i}& &\\
   &\ddots &  & & & & &\ddots & & & & &\ddots & \\
   &&a_{t,1,i}& & & \cdots & & & a_{t,m_{1},i} & & & & & a_{t,(m_1+1)}\\
   & & &\ddots & & & & & &\ddots & & & &\\
   & & & & a_{t,1,i} & & & & & & a_{t,m_{1},i}& & &\\
\end{array}
\right],\\
&{^{P_k}\mathcal{A}^{2}_{:,:,i}}=\\ &\small{\left[
\begin{array}{*{12}c}
& & &  a_{t,(m_1+2),i} & & & & &  &a_{t,\left[\frac{2k/b}{h/m}\right]+1,i}& &\\
& & & & \ddots & & & & && \ddots &\\
 a_{t,(m_1+1),i}& & & & &  a_{t,(m_1+2),i} & & &\cdots  & & & a_{t,\left[\frac{2k/b}{h/m}\right]+1,i}\\
 &\ddots& & & & &\ddots & & & & &\\
 & & a_{t,(m_1+1)i}& & & & &  a_{t,(m_1+2),i} & & & &\\
    \end{array}
\right],}\\
&\vdots\\
&{^{P_k}\mathcal{A}^{\frac{h}{m}}_{:,:,i}}:=\left[
\begin{array}{*{9}c} 
    & & & &a_{t,\frac{k}{b},i}& & & &\\
    & & & & &\ddots & & &\\
    a_{t,\frac{k}{b}-m_1,i}& & &\cdots& & & a_{t,\frac{k}{b},i}& &\\
    &\ddots & & & & & &\ddots &\\
   & & a_{t,\frac{k}{b}-m_1,i} & & & & & & a_{t,\frac{k}{b},i}
\end{array}
\right],\\
&{^{P_k}\mathcal{A}^{\frac{h}{m}+1}_{:,:,i}}:= \\ &\small{ \left[
\begin{array}{*{14}c} 
 a_{t,\frac{k}{b}+1,i}& & & & && a_{t,\frac{k}{b}+m_{1},i}& & & &a_{t,\frac{k}{b}+m_{1}+1,i} & & & \\
 &\ddots& & & & &&\ddots& & &  & \ddots&& \\
 & &a_{t,\frac{k}{b}+1,i}& & &\cdots & &&a_{t,\frac{k}{b}+m_{1},i}& & &&& a_{t,\frac{k}{b}+m_{1}+1,i}\\
 & & &\ddots& & & & &&\ddots& & & & \\
 & & & &a_{t,\frac{k}{b}+1,i}& & & & & &a_{t,\frac{k}{b}+l_{1},i}& & &\\
\end{array}
\right],}\\
\end{align*}
\begin{align*}
\vdots\\
&{^{P_k}\mathcal{A}^{pa}_{:,:,i}}:=\left[
\begin{array}{*{9}c} 
   & & & &a_{t,n,i}& & & &\\
    & & & & &\ddots & & &\\
    a_{t,n-m_1,i}& & &\cdots& & & a_{t,n,i}& &\\
    &\ddots & & & & & &\ddots &\\
   & & a_{t,n-m_1,i} & & & & & & a_{t,n,i}
\end{array}
\right] ,
\end{align*}
and for $r,k \in \mathbb{Z}$, 
$P_k = \{ r-k+2,\ldots,r,1,\ldots,r-k+1 \},
\quad
\widetilde{(k:r)} =
\begin{cases}
\{ r-k+2,\ldots,r \}, & k \le r,\\
\varnothing, & k>r.
\end{cases}$
\end{proof}
Based on the definition of a Circulant matrix, we now introduce its Tensor analogue, referred to as a Circulant Tensor.
\begin{definition}\label{toeplitz def}
(Circulant Tensor) Denote $[n]=\{1,2,\ldots,n\}.$ Let $\mathcal{A} = (a_{i_1 \cdots i_m})$ be a real $m^{th}$ order $n$-dimensional tensor. If for $i_l, k_l \in [n]$ satisfying 
$k_l \equiv i_l + 1 \pmod{n}, \ l \in [m],$ we have
$$a_{i_1 \cdots i_{m-1} i_m} = a_{k_1 \cdots k_{m-1} i_m},$$
then we say that $\mathcal{A}$ is an $m^{th}$ order circulant tensor. $\mathcal{A} := \operatorname{circ}(\mathcal{A}),$
is referred as Circulant tensor
whenever all of its frontal slices are Circulant matrices.
\end{definition}
\begin{definition}(F-diagonal Tensor \cite{M.Kilmer2011factorizationfortensor})
A third order tensor $\mathcal{D}\in \mathbb{C}^{n\times n\times  r}$ is F-diagonal, if it's frontal slices are all diagonal. 
    
\end{definition}
\begin{remark}
    Assume that the tensor–vector Eq.(\ref{main equation}) has a solution. If $\mathcal{A, B}$ are F-diagonal tensors with identical diagonal entries, then each sub-tenosr of $\mathcal{C}$ exhibits a Circulant tensor structure.
\end{remark}
%%%%%%%%%%%%%%%%%%%%%%%%%%%%%%%%%%%%%%%%%%%%%%%%%%%%%%%%%%%%%%%%%%%%%%%%%%%%%%%%%%%%%%%%%%%%%%
\section{Solvability analysis for tensor Eq. with matrix solutions}
Here we examine solvability properties of tensor matrix Eq. formulated through the STP:
\begin{equation} \label{main equation 2}
\underbrace{\underbrace{\mathcal{A} \ltimes X}_{\frac{m l_1}{n} \times \frac{l_1q}{p} \times l_3}\ltimes
\mathcal{B}}_{\frac{m l_2 p}{nq} \times \frac{b l_2}{a} \times l_4}  = \mathcal{C} \in \mathbb{C}^{h \times k \times r} ,
 \end{equation}
where tensors $\mathcal{A} \in \mathbb{C}^{m \times n \times r},\
\mathcal{B} \in \mathbb{C}^{a \times b \times r}, \
\mathcal{C} \in \mathbb{C}^{h \times k \times r}$ 
are known, while $X \in \mathbb{C}^{p \times q}$ is an unknown matrix to be determined. Here $l_1 = \operatorname{lcm}(n,p), \ l_3 = \operatorname{lcm}(r,1), \
l_2 = \operatorname{lcm}\!\left(\frac{q l_1}{p}, a\right),\ l_4=\operatorname{lcm}(l_3,r).$  As in the tensor–vector formulation, we begin by examining the special case $h=m$ and then proceed to the arbitrary dimension case.
\subsection{The special case corresponds to \texorpdfstring{$h=m$}{m = h}}
Here we investigate the compatibility of Eq.~(\ref{main equation 2}) when $h= m$. We present a lemma that provides necessary conditions on the tensor dimensions for the Eq.~(\ref{main equation 2}) to admit a solution, along with constraints on the size of the solution matrix $X\in \mathbb{C}^{p\times q}$.
\begin{lemma}\label{m=h matrix case}
Suppose that the tensor matrix Eq.~(\ref{main equation 2}) with $h=m$ admits a solution of size $p \times q$. Then the dimensions of the tensors  $\mathcal{A}$, $\mathcal{B}$,
$\mathcal{C}$, together with the integers $p$ and $q$, necessarily hold the resulting constraints:
\vspace{-0.5em}
\begin{enumerate}[label=(\roman*)]
\item $\frac{k}{b}$ must belong to $\mathbb{N}$;
\vspace{-0.7em}
\item $p = \frac{n}{\beta},\ q = \frac{a k}{b \beta},$ where $\beta$ is a common factor of $n$ and $\frac{a k}{b}$.
\end{enumerate}
\begin{proof}
Let $X \in \mathbb{C}^{p \times q}$ satisfy  Eq.~(\ref{main equation 2}). By Eq.(\ref{main equation 2}) we get $\frac{m l_2 p}{nq} = h,
\quad
\frac{b l_2}{a} = k, \quad l_{4}=r.$
From these relations, it follows that, $p = \frac{b nq}{a k},\ 
\frac{k}{b} = \frac{l_2}{a}$
and hence $\frac{k}{b}\in \mathbb{N}$. Define $\frac{ak}{qb}=\frac{n}{p}= \beta,$ then $\beta\in \mathbb{N}$. Moreover, the conditions $\beta\mid n$ and
$\beta\mid\frac{a k}{b}$ imply that $n$ and
$\frac{a k}{b}$ have a common factor $\beta$.
\end{proof}
\end{lemma}
\begin{remark}
Suppose that $\frac{a k}{b}$ and $n$ have a common factor $\beta_j$ for $j = 1, \dots, s$. Then, Eq.~(\ref{main equation 2}) can admit solutions of sizes $p_j \times q_j$, here $p_j = \frac{n}{\beta_j}, \ q_j = \frac{a k}{b \beta_j}.$ We refer to these as permissible sizes, and note the following properties.\vspace{-0.5em}
\begin{enumerate}[label=(\roman*)]
\item Suppose the equation admits solutions of two unequal allowable dimensions,
represented by $X^{p_1 \times q_1}$ and $X^{p_2 \times q_2}$ and satisfy $\frac{q_2}{q_1} = \frac{p_2}{p_1} \in \mathbb{Z} > 1.$
Then the larger solution can be expressed as, $X^{p_2 \times q_2} = X^{p_1 \times q_1} \otimes I_{\frac{q_2}{q_1}}.$ \vspace{-0.7em}
\item Define $\tilde{\beta} = \gcd\{n, \frac{a k}{b}\}, \ \tilde{p} = \frac{n}{\tilde{\beta}}, \ \tilde{q} = \frac{a k}{b \tilde{\beta}}.$ If Eq.~(\ref{main equation 2}) admits solution with minimal size $\tilde{p} \times \tilde{q}$ , then a solution exists for every other permissible size as well.\vspace{-0.7em}
\item In the special case where $b = k$ and  $\beta = 1$, then Eq.~(\ref{main equation 2}) reduces to the Eq.~(\ref{main equation 2}) with the t-product.
\end{enumerate}
\end{remark}
\begin{example}
Take tensors $\mathcal{A} \in \mathbb{C}^{2 \times4 \times 2}$, \ $\mathcal{B} \in \mathbb{C}^{2 \times 1 \times 2}$ and $\mathcal{C} \in \mathbb{C}^{2 \times 4 \times 2}$ as follows; \\
 \[ \begin{tabular}{c c c c|c c c c}
\hline
  \multicolumn{4}{c}{$\mathcal{A}(:,:,1)$} & 
  \multicolumn{4}{c}{$\mathcal{A}(:,:,2)$} \\
  \hline
1&-1&2&0&0&1&-1&3 \\
0&1&-1&2&2&0&1&-1 \\
\hline
\end{tabular}, \quad
\begin{tabular}{c|c}
    \hline
  \multicolumn{1}{c}{$\mathcal{B}(:,:,1)$} & 
  \multicolumn{1}{c}{$\mathcal{B}(:,:,2)$} \\
  \hline 
1&-1\\
0&1\\
\hline
\end{tabular},\quad
    \begin{tabular}{c c c c| c c c c}
\hline
  \multicolumn{4}{c}{$\mathcal{C}(:,:,1)$} & 
  \multicolumn{4}{c}{$\mathcal{C}(:,:,2)$} \\
  \hline
2&-4&5&-3&-1&3&-4&6 \\
-2&2&-3&5&4&-1&3&-4 \\
\hline
\end{tabular}.\]
According to Lemmma~(\ref{m=h matrix case}), the permissible solutions sizes are $1 \times 2, \ 2 \times 4,\ 4 \times 8 $. A straightforward calculation shows that $X_1=[2,\ 1]$ is a solution of tensor matrix Eq.~(\ref{main equation 2}). Moreover, the matrices $$X_2= X_1 \otimes I_2, \quad X_3 = X_1 \otimes I_4$$ satisfy the Eq(\ref{main equation 2}) as well,  thereby providing further admissible solutions. 
\end{example}
%%%%%%%%%%%%%%%%%%%%%%%%%%%%%%%%%%%%%%%%%%%%%%%%%%%%%%%%%%%%%%%%%%%%%%%%%%%%%%%%%%%%%%%%%%%%%%%%%%%%%%%%%%%%%%%%%%%%%%%%%
\subsection{The case corresponds to \texorpdfstring{$h\neq m$}{m = h}} 
We now investigate the compatibility of Eq.~(\ref{main equation 2}) when $h\neq m$. We present a lemma that provides necessary conditions on the tensor dimensions for the Eq.~(\ref{main equation 2}) to admit a solution, along with constraints on the size of the solution matrix $X\in \mathbb{C}^{p\times q}$.
\begin{lemma} \label{m neq h matrix case}
Assume that the tensor–matrix Eq.~(\ref{main equation 2}) with $h\neq m$ admits a solution
$X \in \mathbb{C}^{p \times q}$. Then the tensor dimensions and the integers $p,q$ necessarily hold the resulting constraints:
\vspace{-0.5em}
\begin{enumerate}[label=(\roman*)]
\item[(i)]  $\frac{h}{m}$ and $\frac{k}{b}$ must belong in $\mathbb{N}$;
\vspace{-0.7em}
\item[(ii)] $p=\frac{nh}{m\alpha}$, $q=\frac{ak}{b\alpha}$, here $\alpha$
divide both $\frac{nh}{m}$ and $\frac{ak}{b}$;
\vspace{-0.7em}
\item[(iii)]  If $\beta=\gcd\left\{\frac{h}{m},\alpha\right\}$ then $\gcd\left\{\frac{nh}{m\beta},\frac{\alpha}{\beta}\right\}=1$,
$\gcd\{\beta,\frac{k}{b}\}=1$, $\beta \mid a$,
\end{enumerate}
\end{lemma}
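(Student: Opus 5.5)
We follow the proof of Lemma~\ref{m neq h vector case}: read off the dimension identities of Eq.~(\ref{main equation 2}), introduce an integer $\alpha$ measuring how much of the lcm-enlargement is due to $X$, and translate the coprimality built into $l_1=\operatorname{lcm}(n,p)$ and $l_2=\operatorname{lcm}(\frac{ql_1}{p},a)$ into the stated gcd conditions.

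\emph{Step 1: dimension identities.} Comparing the sizes in Eq.~(\ref{main equation 2}) with $\mathcal{C}\in\mathbb{C}^{h\times k\times r}$ gives
\[
\frac{ml_2p}{nq}=h,\qquad \frac{bl_2}{a}=k,\qquad l_4=r .
\]
The second identity gives $\frac{k}{b}=\frac{l_2}{a}\in\mathbb{N}$, since $a\mid l_2$.

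\emph{Step 2: defining $\alpha$ and getting (i)--(ii).} Set $\alpha:=\frac{nh}{mp}$. The first identity gives $l_2=\frac{nhq}{mp}=\alpha q$, and combining with $l_2=\frac{ak}{b}$ yields
\[
p=\frac{nh}{m\alpha},\qquad q=\frac{ak}{b\alpha}.
\]
Next put $\gamma:=\frac{l_1}{p}$ and $\delta:=\frac{l_2}{q\gamma}$; both are integers because $l_2$ is a multiple of $\frac{ql_1}{p}=q\gamma$. Then $\alpha=\frac{l_2}{q}=\gamma\delta\in\mathbb{N}$. Also $l_1=p\gamma=\frac{nh}{m\delta}$, so $\frac{l_1}{n}=\frac{h}{m\delta}\in\mathbb{N}$. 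This gives $\frac{h}{m}\in\mathbb{N}$, completing (i), and $\delta\mid\frac{h}{m}$. Since $p$ and $q$ are integers, $\alpha$ divides both $\frac{nh}{m}$ and $\frac{ak}{b}$, which is (ii).

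\emph{Step 3: coprimality facts for (iii).} The two lcm's give the standard coprimality relations:
\[
\gcd\Big\{\frac{l_1}{n},\frac{l_1}{p}\Big\}=\gcd\Big\{\frac{h}{m\delta},\gamma\Big\}=1,
\qquad
\gcd\Big\{\frac{l_2}{q\gamma},\frac{l_2}{a}\Big\}=\gcd\Big\{\delta,\frac{k}{b}\Big\}=1 .
\]
From $l_2=\delta q\gamma$ being the lcm with $a$, and $\gcd\{\delta,\frac{k}{b}\}=1$, we also get $\delta\mid a$. The remaining task is to identify $\beta=\gcd\{\frac{h}{m},\alpha\}$ with $\delta$, or at least to transfer the needed properties to $\beta$.
\begin{itemize}
\item Since $\delta\mid\frac{h}{m}$ and $\delta\mid\alpha$, we have $\delta\mid\beta$.
\item Conversely, any prime power dividing both $\frac{h}{m}=\delta\cdot\frac{h}{m\delta}$ and $\alpha=\delta\gamma$ beyond $\delta$ would have to divide both $\frac{h}{m\delta}$ and $\gamma$. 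This is excluded by $\gcd\{\frac{h}{m\delta},\gamma\}=1$, so $\beta=\delta$.
\end{itemize}
With $\beta=\delta$, the conditions $\gcd\{\beta,\frac{k}{b}\}=1$ and $\beta\mid a$ follow at once. For the first condition, $\frac{nh}{m\beta}=\frac{nh}{m\delta}=l_1$ and $\frac{\alpha}{\beta}=\gamma=\frac{l_1}{p}$, so $\gcd\{\frac{nh}{m\beta},\frac{\alpha}{\beta}\}=\gcd\{l_1,\frac{l_1}{p}\}$.

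\emph{Main obstacle.} The delicate step is the claimed $\gcd\{\frac{nh}{m\beta},\frac{\alpha}{\beta}\}=1$: literally it reads $\gcd\{l_1,\frac{l_1}{p}\}=1$, which is false in general. What the lcm structure actually gives is $\gcd\{\frac{l_1}{n},\frac{l_1}{p}\}=\gcd\{\frac{h}{m\beta},\frac{\alpha}{\beta}\}=1$. So I expect this part of (iii) to need the reading with $\frac{h}{m\beta}$, or equivalently $\frac{n h}{m\beta}$ interpreted relative to $n$. I would first verify $\beta=\delta$ carefully with the prime-power argument above, and then state this coprimality in that corrected form.
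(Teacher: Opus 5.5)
Your proposal is correct and follows essentially the same route as the paper: the same dimension identities, $\alpha=l_2/q$, your $\delta=l_2/(ql_1/p)$ being exactly the paper's $\beta$, and coprimality coming from the two lcm's (your derivation of $\delta\mid a$ from $\delta q\gamma=l_2=a\frac{k}{b}$ and $\gcd\{\delta,\frac{k}{b}\}=1$ is cleaner than the paper's). Your diagnosis of the first clause of (iii) is also right: the paper's own proof only obtains $\gcd\{\frac{h}{m\beta},\frac{\alpha}{\beta}\}=1$ from $\frac{l_1}{n}$ and $\frac{l_1}{p}$, and the literal clause fails, e.g.\ for $m=1$, $n=2$, $p=3$, $q=1$, $a=b=1$ one gets $h=3$, $k=2$, $\alpha=2$, $\beta=1$ and $\gcd\{\frac{nh}{m\beta},\frac{\alpha}{\beta}\}=\gcd\{6,2\}=2$.
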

\begin{proof}
Assume that
\[
\underbrace{\underbrace{\mathcal{A} \ltimes X}_{\frac{m l_1}{n} \times \frac{l_1q}{p} \times l_3}\ltimes
\mathcal{B}}_{\frac{m l_2 p}{nq} \times \frac{b l_2}{a} \times l_4}  = \mathcal{C} \in \mathbb{C}^{h \times k \times r},
\] 
which implies that $\frac{m l_2 p}{n q} = h,\
\frac{b l_2}{a} = k, \ l_4=r,$
where
 $l_1 = \operatorname{lcm}\{p,n\},
\
l_2 = \operatorname{lcm}\!\left\{a,\frac{ql_1}{p} \right\}, 
\
l_3 = \operatorname{lcm}\{r,1\}, \
\
l_4 = \operatorname{lcm}\{l_3,r\}.$
Because $\frac{l_2}{a}=\frac{k}{b}$, then $\frac{k}{b}\in \mathbb{N}$.
Moreover, \ $\frac{l_2}{q l_1/p}= \frac{n h}{m l_1}=\frac{h}{m}\cdot\frac{n}{l_1}= \beta.$ Since $l_1 \mid n$, the above quantity is an integer, which forces
$\frac{h}{m}\in \mathbb{N}$. Thus, condition (i) holds.\\
For (ii), because $\frac{l_2}{q l_1/p}\in\mathbb{Z}$ and $p \mid l_1$, we conclude that
$q \mid l_2$. Combining this with $\frac{m l_2 p}{n q}=h, \  \frac{b l_2}{a}=k,$ yields
\[
\frac{l_2}{q}=\frac{nh}{mp}=\frac{ak}{bq}.
\]
Let $\alpha = \frac{l_2}{q}.$
Then $\alpha$ is a common factor of $\frac{nh}{m}$ and $\frac{ak}{b}$, and
\[
p=\frac{nh}{m\alpha}, \quad q=\frac{ak}{b\alpha}.
\] 
For (iii), define $\beta := \frac{l_2}{q l_1/p}.$
From the above relations, $\beta \mid \frac{h}{m}$. Since
\[
l_1=\operatorname{lcm}(n,p), \quad
\frac{l_1}{n}=\frac{h}{m\beta}, \quad
\frac{l_1}{p}=\frac{\alpha}{\beta},
\]
we obtain
$\gcd\left\{\frac{h}{m\beta},\frac{\alpha}{\beta}\right\}=1,$
which implies $\beta=\gcd\{\frac{h}{m},\alpha\}.$
Furthermore, 
\[
l_2=\operatorname{lcm}\left\{a,\frac{q l_1}{p}\right\}, \quad
\frac{l_2}{a}=\frac{k}{b},
\]
we conclude that $\gcd\left\{\beta,\frac{k}{b}\right\}=1$. Finally, since
$\frac{l_1}{p}=\frac{ak}{bq\beta}$ belong to $\mathbb{N}$, implies that $\beta$ divides $a$.
\end{proof}
\begin{remark}
The solution dimensions characterized in Lemma~\ref{m neq h matrix case} are referred to as
\emph{admissible sizes}. These sizes possess the following structural properties:\vspace{-0.5em}
\begin{enumerate}[label=(\roman*)]
\item
When $\alpha = 1$, the corresponding dimensions are $p=\frac{nh}{m}$ and $q=\frac{ak}{b}$. In this case,
Eq.~(\ref{main equation 2}) simplifies to $(\mathcal{A}\otimes I_{\frac{h}{m}})\,(X\otimes I_{1\times 1 \times r})\,(\mathcal{B}\otimes I_{\frac{k}{b}})
=\mathcal{C},$ which coincides with the classical formulation based on the standard t-product.
\vspace{-0.6em}
\item
Assume that $X$ is a solution of size $p_1\times q_1$ that satisfies the allowable size conditions. If there exists an integer $\ell>1$ such that $p_2=\ell p_1 \quad \text{and} \quad q_2=\ell q_1,$ then the enlarged matrix $X\otimes I_\ell$, having dimensions
$p_2\times q_2$, also solves Eq.~(\ref{main equation 2}).
\vspace{-0.6em}
\item
Let $\hat{\alpha}=\gcd\!\left(\frac{nh}{m},\,\frac{ak}{b}\right),$ and define the smallest allowable dimensions by
$p=\frac{nh}{m\hat{\alpha}},\  q=\frac{ak}{b\hat{\alpha}}.$ If Eq.~(\ref{main equation 2}) admits a solution with this minimal size, then solutions exist for every other allowable size determined by Lemma~(\ref{m neq h matrix case}).
\end{enumerate}
\end{remark}
\begin{example}\label{example3}
 Consider tensors $\mathcal{A} \in \mathbb{C}^{2 \times 2 \times 2},$ \ $\mathcal{B}\in \mathbb{C}^{2 \times 1 \times 2}$ and $\mathcal{C} \in \mathbb{C}^{4 \times 3 \times 2}$ as follows:
$$\begin{tabular}{c c| c c}
\hline
  \multicolumn{2}{c}{$\mathcal{A}(:,:,1)$} & 
  \multicolumn{2}{c}{$\mathcal{A}(:,:,2)$} \\
  
\hline
1&2&-1&2 \\
0&1&0&-1 \\
\end{tabular},\quad 
\begin{tabular}{c|c}
\hline
  \multicolumn{1}{c}{$\mathcal{B}(:,:,1)$} & 
  \multicolumn{1}{c}{$\mathcal{B}(:,:,2)$} \\
  \hline
1&1 \\
0&1\\
\end{tabular}
 \text{and} \
\begin{tabular}{c c c|c c c}
\hline
  \multicolumn{3}{c}{$\mathcal{C}(:,:,1)$} & 
  \multicolumn{3}{c}{$\mathcal{C}(:,:,2)$} \\
  
\hline
12&-6&3&12&6&3 \\
5&12&-6&5&12&6 \\
0&0&0&0&0&0\\
-2&0&0&2&0&0\\

\end{tabular}.$$ By Lemma~\ref{m neq h matrix case}, the admissible solution sizes are $2 \times 3 \ \text{and}\ 4 \times 6.$ One can directly verify that
 \[X_1=\begin{bmatrix}
    1&4&6\\
    3&2&0\\ 
\end{bmatrix} \in \mathbb{C}^{2 \times 3}\] satisfies the tensor–matrix Eq.(\ref{main equation 2}). Moreover, by expanding $X_1$ through a Kronecker product with the identity, the matrix $X_2 = X_1 \otimes I_2$ also fulfills Eq.(\ref{main equation 2}), yielding a solution of dimension $4 \times 6$.
\end{example}
It should be noted that the preceding lemma establishes only a necessary condition for the existence of a solution. The following example demonstrates that this condition is not sufficient in general.
\begin{example}
Consider $\mathcal{A,\ B}$ as in Example(\ref{example2}) and take $\mathcal{C}$ as follows:
 $$\begin{tabular}{c c c|c c c}
\hline
  \multicolumn{3}{c}{$\mathcal{C}(:,:,1)$} & 
  \multicolumn{3}{c}{$\mathcal{C}(:,:,2)$} \\
\hline
12&-6&3&12&6&3 \\
5&12&-6&5&12&6 \\
1&0&0&1&0&1\\
-2&0&0&2&0&1\\
\end{tabular}.$$  \\
Though the given tensors are compatible, it has no solution.
\end{example}
\begin{remark}
Let $\mathcal{A \ltimes X \ltimes B = C}$ admits matrix solution with the case $m= h, m\neq h$ under the combatilbility condition. If $\frac{ml_1}{n} = \frac{ml_{2}p}{nq}=h,\ \frac{l_1q}{p}=\frac{bl_2}{a}=k$ and $\mathcal{B}$ is an identity tensor then $\mathcal{A \ltimes X \ltimes B= C}$ reduces to $\mathcal{A \ltimes X = C.}$
\end{remark}
Analogous to Theorem \ref{toeplitz theorem}, we characterize the explicit structure of $\mathcal{C}$ using the properties of the semi-tensor product and the dimensional compatibility condition.
\begin{theorem}
Suppose that the tensor matrix Eq.~\eqref{main equation 2} is solvable. Then the tensor
$\mathcal{C}$ must exhibit a block Toeplitz structure \ref{toeplitz def}. More precisely, $\mathcal{C}$ admits the block
representation

$$
\mathcal{C} =
\left[
\begin{array}{cccc}
\operatorname{Block}_{11}(\mathcal{C}) &
\operatorname{Block}_{12}(\mathcal{C}) &
\cdots &
\operatorname{Block}_{1b}(\mathcal{C}) \\
\vdots & \vdots & \vdots & \vdots \\
\operatorname{Block}_{\frac{h}{m\beta},1}(\mathcal{C}) &
\operatorname{Block}_{\frac{h}{m\beta},2}(\mathcal{C}) &
\cdots &
\operatorname{Block}_{\frac{h}{m\beta},b}(\mathcal{C}) \\
 \hdashline \\
\vdots & \vdots & \vdots & \vdots \\[6pt]
\hdashline \\
\operatorname{Block}_{\frac{h}{\beta}-\frac{h}{m\beta}+1,1}(\mathcal{C}) &
\operatorname{Block}_{\frac{h}{\beta}-\frac{h}{m\beta}+1,2}(\mathcal{C}) &
\cdots &
\operatorname{Block}_{\frac{h}{\beta}-\frac{h}{m\beta}+1,b}(\mathcal{C}) \\[4pt]
\vdots & \vdots & \vdots & \vdots \\
\operatorname{Block}_{\frac{h}{\beta},1}(\mathcal{C}) &
\operatorname{Block}_{\frac{h}{\beta},2}(\mathcal{C}) &
\cdots &
\operatorname{Block}_{\frac{h}{\beta},b}(\mathcal{C})
\end{array}
\right],$$\\
here each $\operatorname{Block}_{j,k}(\mathcal{C})$ is a Toeplitz tensor of identical size of $\frac{bh}{\beta}$ for $j=1, 2, \cdots ,\frac{h}{\beta}; k=1,2,\cdots ,b$.
In particular, all blocks belong to
$\mathbb{C}^{\beta \times \frac{k}{b} \times r}$.
\begin{proof}
By applying arguments analogous to those in the proof of Theorem~\ref{toeplitz theorem}, we conclude that
\[
\mathcal{A} \ltimes X
= (\mathcal{A} \otimes I_{\frac{h}{m\beta}})
(X \otimes I_{\frac{\alpha}{\beta}\times \frac{\alpha}{\beta}\times r})
=
\left[
\begin{array}{cccc}
\operatorname{Block}_{11}(\mathcal{Z}) & \cdots & \operatorname{Block}_{1q}(\mathcal{Z}) \\
\operatorname{Block}_{21}(\mathcal{Z}) & \cdots & \operatorname{Block}_{2q}(\mathcal{Z}) \\
\vdots & \ddots & \vdots \\
\operatorname{Block}_{m1}(\mathcal{Z}) & \cdots & \operatorname{Block}_{mq}(\mathcal{Z})
\end{array}
\right]
= \mathcal{Z},
\]
where $\operatorname{Block}_{st}(\mathcal{Z}) \in
\mathbb{C}^{\frac{h}{m\beta} \times \frac{\alpha}{\beta} \times r}$
is a Toeplitz tensor for
$s = 1,\ldots,m$ and $t = 1,\ldots,q$.
Taking the $j^{th}$ horizontal slice of $\mathcal{Z}$ together with the $k^{th}$ lateral slice of
$\mathcal{B}$ yields
\[
\operatorname{Row}(\mathcal{Z})_j \ltimes \operatorname{Col}(\mathcal{B})_k
=
(\operatorname{Row}(\mathcal{Z})_j \otimes I_{\beta})
(\operatorname{Col}(\mathcal{B})_k \otimes I_{\frac{k}{b}})
=
\operatorname{Block}_{jk}(\mathcal{C}).
\]
Since $\mathcal{Z}$ is Toeplitz, each $\operatorname{Block}_{jk}(\mathcal{C})$ inherits the
Toeplitz structure, which completes the proof.
\end{proof}
\end{theorem}
\begin{remark}
     Assume that the tensor–matrix Eq.(\ref{main equation 2}) has a solution. If $\mathcal{A, B}$ are  F-diagonal tensors with identical diagonal entries, then each sub-tesor of $\mathcal{C}$ exhibits a Circulant tensor structure.
\end{remark}
%%%%%%%%%%%%%%%%%%%%%%%%%%%%%%%%%%%%%%%%%%%%%%%%%%%%%%%%%%%%%%%%%%%%%%%%%%%%%%%%%%%%%%%%%%%%%%%%%%%%%%%%%%%%%%%%%%%%
\section{\textbf{Solvability analysis for tensor  Eq. with tensor solutions}}
Here we examine solvability properties of tensor–tensor Eq. formulated through the STP:
\begin{align} \label{main equation 3}
 \underbrace{\underbrace{\mathcal{A} \ltimes \mathcal{X}}_{\frac{m l_1}{n} \times \frac{l_1q}{p} \times l_3}\ltimes
\mathcal{B}}_{\frac{m l_2 p}{nq} \times \frac{b l_2}{a} \times l_4}  = \mathcal{C} \in \mathbb{C}^{h \times k \times r} ,
 \end{align}
\noindent here $l_1 = \operatorname{lcm}\{p,n\}, \ l_3 = \operatorname{lcm}\{l,r\}, \
l_2 = \operatorname{lcm}\{a,\frac{q l_1}{p}\},\ l_4=\operatorname{lcm}\{s,l_3\}$ tensors $\mathcal{A} \in \mathbb{C}^{m \times n \times r},\
\mathcal{B} \in \mathbb{C}^{a \times b \times s}, \
\mathcal{C} \in \mathbb{C}^{h \times k \times t}$
are known, while $\mathcal{X} \in \mathbb{C}^{p \times q \times l}$ is a tensor to be determined. Following the approach adopted for the tensor–vector case, we begin by analyzing the special situation $h=m$ and subsequently address the general case $h\neq m$.

\subsection{The special case corresponds to \texorpdfstring{\(m=h\)}{m=h}}
Here we investigate the compatibility of Eq.(\ref{main equation 3}) when $h= m$. We present a lemma that provides necessary conditions on the tensor dimensions for the Eq.(\ref{main equation 3}) to admit a solution, along with constraints on the size of the solution tensor $\mathcal{X}\in \mathbb{C}^{p\times q \times l}$.
\begin{lemma}\label{m=h tensor case}
Suppose that the tensor tensor Eq.(\ref{main equation 3}) in the case $h=m$ admits a solution of dimension $p \times q \times l$. Then the sizes of the tensors  $\mathcal{A}$, $\mathcal{B}$,
$\mathcal{C}$, together with the integers $p$ , $q$ and $l$, necessarily hold the resulting constraints:\vspace{-0.5em}
\begin{enumerate}[label=(\roman*)]
\item $\frac{k}{b}$ must belong to $\mathbb{N}$;\vspace{-0.7em}
\item $p = \frac{n}{\beta},\ q = \frac{a k}{b \beta},$ where $\beta$ is a common factor of $n$ and $\frac{a k}{b}$.
\end{enumerate}
Moreover, $l_3= \operatorname{lcm(r,l)},\ l_4=\operatorname{lcm}(l_3,s)=t$
\begin{proof}
Let $\mathcal{X} \in \mathbb{C}^{p \times q\times l}$ satisfy Eq.(\ref{main equation 3}). By Eq.(\ref{main equation 3}) we get
 $\frac{m l_2 p}{nq} = h,
\quad
\frac{b l_2}{a} = k,
\quad
l_4=t,$ \\
From these relations, it follows that, $p = \frac{b nq}{a k},\ 
\frac{k}{b} = \frac{l_2}{a}$
and hence $\frac{k}{b}$ must belong to $\mathbb{N}$. Define $\frac{ak}{qb}=\frac{n}{p}= \beta,$ then $\beta$ is positive integer. Moreover, the conditions $\beta \mid n$ and
$\beta\mid\frac{a k}{b}$ imply that $n$ and $\frac{a k}{b}$ have a common factor $\beta$.
\end{proof}
\end{lemma}
\begin{remark}
Suppose $\frac{a k}{b}$ and $n$ have common factor $\beta_j$, $j = 1, \dots, s$. Then, Eq.(\ref{main equation 3}) may admit solutions whose dimensions are $p_j \times q_j \times l_j$, where $p_j = \frac{n}{\beta_j}, \ q_j = \frac{a k}{b \beta_j}.$ Such dimensions are referred as admissible sizes. The following properties hold for solutions corresponding to different admissible sizes.\vspace{-0.5em}
\begin{enumerate}[label=(\roman*)]
\item Suppose that $\mathcal{X}^{p_1 \times q_1 \times l_1}$ and $\mathcal{X}^{p_2 \times q_2 \times l_2}$ are two solutions with distinct admissible sizes satisfying, $\frac{q_2}{q_1} = \frac{p_2}{p_1} \in \mathbb{Z}{>1} \ \text{and}\ \frac{l_2}{l_1}= w \in \mathbb{Z}{>1}.$
Then the solution can be constructed as, $\mathcal{X}^{p_2 \times q_2 \times l_2} = \mathcal{X}^{p_1 \times q_1 \times l_1} \otimes I_{\frac{q_2}{q_1} \times\frac{q_2}{q_1} \times wl_1}.$ \vspace{-0.7em}
\item Define $\tilde{\beta} = \gcd\!\left(n, \frac{a k}{b}\right), \ \tilde{p} = \frac{n}{\tilde{\beta}}, \ \tilde{q} = \frac{a k}{b \tilde{\beta}}.$ If Eq.(\ref{main equation 3}) admits a solution with the minimal size $\tilde{p} \times \tilde{q} \times \tilde{l}$, then a solution exists for every permissible size.

\end{enumerate}
\end{remark}
\begin{example}
Take tensor $\mathcal{A} \in \mathbb{C}^{2 \times 4 \times 2}, $ \ $\mathcal{B} \in \mathbb{C}^{2 \times 1 \times 2}$ \ and\ $\mathcal{C} \in \mathbb{C}^{2 \times 2 \times 2}$ as follows:
$$ \begin{tabular}{c c c c|c c c c}
\hline
  \multicolumn{4}{c}{$\mathcal{A}(:,:,1)$} & 
  \multicolumn{4}{c}{$\mathcal{A}(:,:,2)$} \\
  
\hline
-1&0&1&0&0&2&1&-1 \\
0&-1&0&1&1&2&1&0 \\

\end{tabular}, \quad
\begin{tabular}{c|c}
    \hline
  \multicolumn{1}{c}{$\mathcal{B}(:,:,1)$} & 
  \multicolumn{1}{c}{$\mathcal{B}(:,:,2)$} \\
  
\hline 
2&1\\
1&0\\
\end{tabular}\\ \ \text{and} \
    \begin{tabular}{c c| c c}
\hline
  \multicolumn{2}{c}{$\mathcal{C}(:,:,1)$} & 
  \multicolumn{2}{c}{$\mathcal{C}(:,:,2)$} \\
  
\hline
9&0&4&3 \\
10&9&7&4 \\

\end{tabular}.$$
The admissible solution dimensions can be determined from the preceding conditions as follows $2 \times 2 \times 1,\ 2 \times2 \times 2,\ 4 \times 4 \times 1 \ \text{and} \ 4 \times 4 \times 2 $ then tensor tensor Eq.(\ref{main equation 3}) solution is
$$\begin{tabular}{c c |c c}
\hline
  \multicolumn{2}{c}{$\mathcal{X}_1(:,:,1)$} & 
  \multicolumn{2}{c}{$\mathcal{X}_1(:,:,2)$} \\
  
\hline
1&2&2&0 \\
0&1&2&1 \\

\end{tabular}{\in \mathbb{C}^{2 \times 2 \times 2}}. $$ 
\end{example}
%%%%%%%%%%%%%%%%%%%%%%%%%%%%%%%%%%%%%%%%%%%%%%%%%%%%%%%%%%%%%%%%%%%%%%%%%%%%%%%%%%%%%%%%%%%%%%%%%%%%%%
\subsection{The case corresponds to \texorpdfstring{$h\neq m$}{m ≠ h}}
Here we investigate the compatibility of Eq.(\ref{main equation 3}) when $h\neq m$, with given tensors
$\mathcal{A} \in \mathbb{C}^{m \times n \times r}$,
$\mathcal{B} \in \mathbb{C}^{a \times b \times s}$,
$\mathcal{C} \in \mathbb{C}^{h \times k \times t}$. We first present a lemma that provides necessary conditions on the tensor dimensions for the Eq.(\ref{main equation 3}) to admit a solution, along with constraints on the size of the solution tensor $\mathcal{X}\in \mathbb{C}^{p\times q \times l}$.
\begin{lemma} \label{m neq h tensor case}
Assume that the tensor–tensor Eq.(\ref{main equation 3}) with $h\neq m$ admits a solution
$\mathcal{X} \in \mathbb{C}^{p \times q\times l}$. Then the tensor dimensions and the integers $p,q, l$ necessarily hold the resulting constraints:\vspace{-0.5em}
\begin{enumerate}[label=(\roman*)]
\item $\frac{h}{m}$ and $\frac{k}{b}$ belong to $\mathbb{N}$;\vspace{-0.7em}
\item $p=\frac{nh}{m\alpha}$, $q=\frac{ak}{b\alpha}$, here $\alpha$
divide both $\frac{nh}{m}$ and $\frac{ak}{b}$;\vspace{-0.7em}
\item If $\beta=\gcd\left\{\frac{h}{m},\alpha\right\}$ then  $\gcd\left\{\frac{nh}{m\beta},\frac{\alpha}{\beta}\right\}=1$,
$\gcd\left\{\beta,\frac{k}{b}\right\}=1$, $\beta \mid a$.\\
Moreover, $l_3= \operatorname{lcm}\left\{r,l\right\}, \ l_4=\operatorname{lcm}\left\{l_3,s\right\}=t.$
\end{enumerate}
\end{lemma}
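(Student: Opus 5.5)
The argument follows the proof of Lemma~\ref{m neq h matrix case} almost verbatim. The only new feature is the third mode, which now carries $l$ rather than $1$. The plan is to read off the three dimensional identities forced by Eq.~(\ref{main equation 3}), namely
\[
\frac{m l_2 p}{nq}=h,\qquad \frac{b l_2}{a}=k,\qquad l_4=t .
\]
Here $l_1=\operatorname{lcm}\{p,n\}$, $l_2=\operatorname{lcm}\{a,\frac{ql_1}{p}\}$, $l_3=\operatorname{lcm}\{r,l\}$ and $l_4=\operatorname{lcm}\{l_3,s\}$. The third identity is exactly the ``moreover'' clause. Since the first two modes of the STP do not depend on the frontal dimension, the first two identities are literally the same as in the matrix case, and the argument below uses only them.

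For (i), the second identity gives $\frac{l_2}{a}=\frac{k}{b}$, which is an integer because $a\mid l_2$. Next consider the quotient $\frac{l_2}{ql_1/p}$. It is an integer since $\frac{ql_1}{p}\mid l_2$, and by the first identity it equals $\frac{nh}{ml_1}=\frac{h}{m}\cdot\frac{n}{l_1}$. Call it $\beta$. Then $\frac{h}{m}=\beta\cdot\frac{l_1}{n}$ with $n\mid l_1$, so $\frac{h}{m}\in\mathbb{N}$.

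For (ii), note that $q\mid \frac{ql_1}{p}$, since $p\mid l_1$, and $\frac{ql_1}{p}\mid l_2$; hence $q\mid l_2$. Put $\alpha=\frac{l_2}{q}$. The two identities then give $\alpha=\frac{nh}{mp}=\frac{ak}{bq}$, so $p=\frac{nh}{m\alpha}$ and $q=\frac{ak}{b\alpha}$. In particular $\alpha$ divides both $\frac{nh}{m}$ and $\frac{ak}{b}$.

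For (iii), substitute the formulas for $p$ and $q$ to get $\frac{l_1}{n}=\frac{h}{m\beta}$ and $\frac{l_1}{p}=\frac{\alpha}{\beta}$. Because $l_1=\operatorname{lcm}\{n,p\}$, the cofactors $\frac{l_1}{n}$ and $\frac{l_1}{p}$ are coprime. From this, $\beta\mid\frac{h}{m}$ and $\beta\mid\alpha$, and coprimality of the cofactors shows $\beta=\gcd\{\frac{h}{m},\alpha\}$; this identifies the defined quotient with the $\beta$ of the statement. In the same way, $l_2=\operatorname{lcm}\{a,\frac{ql_1}{p}\}$ has cofactors $\frac{l_2}{a}=\frac{k}{b}$ and $\frac{l_2}{ql_1/p}=\beta$, which must also be coprime, giving $\gcd\{\beta,\frac{k}{b}\}=1$. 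Finally, $\beta\mid l_2=\frac{ak}{b}$ together with $\gcd\{\beta,\frac{k}{b}\}=1$ yields $\beta\mid a$.

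The main obstacle is the first coprimality condition in (iii). The clean consequence is $\gcd\{\frac{h}{m\beta},\frac{\alpha}{\beta}\}=1$, whereas the statement writes $\gcd\{\frac{nh}{m\beta},\frac{\alpha}{\beta}\}=1$. I expect to derive the former and then show that the extra factor $n$ does no harm: I will try to argue that any prime dividing both $n$ and $\frac{\alpha}{\beta}$ is incompatible with $\frac{l_1}{p}=\frac{\alpha}{\beta}$ being the exact cofactor of $p=\frac{nh}{m\alpha}$ in $\operatorname{lcm}\{n,p\}$. If that fails, I will read the condition as in the proof of Lemma~\ref{m neq h matrix case}.
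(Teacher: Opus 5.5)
Your derivation of (i), (ii), the identification $\beta=\gcd\{\frac{h}{m},\alpha\}$, the conditions $\gcd\{\beta,\frac{k}{b}\}=1$ and $\beta\mid a$, and the third-mode clause $l_4=\operatorname{lcm}\{l_3,s\}=t$ is correct. It follows the paper's proof step for step, using the same two quotients $\alpha=l_2/q$ and $\beta=l_2/(ql_1/p)$ and the same coprimality of $\operatorname{lcm}$ cofactors. Your route to $\beta\mid a$ (via $\beta\mid l_2=a\cdot\frac{k}{b}$ and Euclid's lemma) is actually tighter than the paper's one-line justification.

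The step that would fail is the upgrade you plan in your last paragraph. No prime-by-prime argument can give $\gcd\{\frac{nh}{m\beta},\frac{\alpha}{\beta}\}=1$, because your own identities yield
\[
\frac{nh}{m\beta}=n\cdot\frac{l_1}{n}=l_1,\qquad \frac{\alpha}{\beta}=\frac{l_1}{p}.
\]
Since $\frac{l_1}{p}\mid l_1$, the printed gcd equals $\frac{\alpha}{\beta}$ itself. It is therefore $1$ only when $\alpha=\beta$, i.e.\ when $n\mid p$, and that is not forced.

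Here is a concrete counterexample. Take arbitrary $\mathcal{A}\in\mathbb{C}^{1\times 4\times 1}$, $\mathcal{X}\in\mathbb{C}^{2\times 1\times 1}$, $\mathcal{B}\in\mathbb{C}^{3\times 1\times 1}$, and set $\mathcal{C}:=\mathcal{A}\ltimes\mathcal{X}\ltimes\mathcal{B}$. Then $l_1=4$ and $l_2=\operatorname{lcm}\{3,2\}=6$, so $\mathcal{C}\in\mathbb{C}^{3\times 2\times 1}$. This gives $h=3\neq m=1$, $k=2$, $\alpha=6$, $\beta=3$, and every other clause of the lemma holds. Yet $\gcd\{\frac{nh}{m\beta},\frac{\alpha}{\beta}\}=\gcd\{4,2\}=2$, while $\gcd\{\frac{h}{m\beta},\frac{\alpha}{\beta}\}=\gcd\{1,2\}=1$.

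So the statement as printed is false in that clause. The true condition is $\gcd\{\frac{h}{m\beta},\frac{\alpha}{\beta}\}=1$, which is exactly what the paper's proof derives before moving on; the extra $n$ is a slip that also appears in Lemma~\ref{m neq h matrix case}. Drop the attempted repair and state your fallback reading explicitly as what you prove.
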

\begin{proof}
Assume that
\[
\underbrace{\underbrace{\mathcal{A} \ltimes \mathcal{X}}_{\frac{m l_1}{n} \times \frac{l_1q}{p} \times l_3}\ltimes
\mathcal{B}}_{\frac{m l_2 p}{nq} \times \frac{b l_2}{a} \times l_4}  = \mathcal{C} \in \mathbb{C}^{h \times k \times r},
\] 
which implies that $\frac{m l_2 p}{n q} = h,\
\frac{b l_2}{a} = k, \ l_4=r,$
where
 $l_1 = \operatorname{lcm}\{p,n\},
\
l_2 = \operatorname{lcm}\!\left\{a,\frac{ql_1}{p} \right\}, 
\
l_3 = \operatorname{lcm}\{r,l\}, \
\
l_4 = \operatorname{lcm}\{l_3,r\}=t.$\\
Because $\frac{l_2}{a}=\frac{k}{b}$, then $\frac{k}{b}\in \mathbb{N}$.
Moreover, \ $\frac{l_2}{q l_1/p}= \frac{n h}{m l_1}=\frac{h}{m}\cdot\frac{n}{l_1}= \beta.$ Since $n \mid l_1$, the above quantity is an integer, which forces
$\frac{h}{m} \in \mathbb{N}$. Thus, condition (i) holds.\\
For (ii),because $\frac{l_2}{q l_1/p}\in\mathbb{Z}$ and $p \mid l_1$, we conclude that
$q \mid l_2$. Combining this with $\frac{m l_2 p}{n q}=h, \  \frac{b l_2}{a}=k,$ yields $\frac{l_2}{q}=\frac{nh}{mp}=\frac{ak}{bq}.$ Let $\alpha = \frac{l_2}{q}.$ Then $\frac{nh}{m}$ and $\frac{ak}{b}$ have a common divisor $\alpha$, and
\[
p=\frac{nh}{m\alpha}, \qquad q=\frac{ak}{b\alpha}.
\] 
For (iii),define $\beta := \frac{l_2}{q l_1/p}.$
From the above relations, $\beta \mid \frac{h}{m}$. Since
\[
l_1=\operatorname{lcm}(n,p), \quad
\frac{l_1}{n}=\frac{h}{m\beta}, \quad
\frac{l_1}{p}=\frac{\alpha}{\beta},
\]
we obtain
$\gcd\{\frac{h}{m\beta},\frac{\alpha}{\beta}\}=1,$
which implies $\beta=\gcd\{\frac{h}{m},\alpha\}.$
Furthermore, 
\[
l_2=\operatorname{lcm}\{\frac{q l_1}{p},a\}, \quad
\frac{l_2}{a}=\frac{k}{b},
\]
we conclude that $\gcd\left\{\beta,\frac{k}{b}\right\}=1$. Finally, since
$\frac{l_1}{p}=\frac{ak}{bq\beta}$ is positive integer, it follows that $\beta \mid a$.
\end{proof}
\begin{remark}
The solution dimensions characterized in Lemma~\ref{m neq h tensor case} are referred to as
\emph{admissible sizes}. These sizes possess the following structural properties.\vspace{-0.5em}

\begin{enumerate}[label=(\roman*)]
\item
Let $\mathcal{X}$ satisfy Eq.(\ref{main equation 3}) with admissible size $p_1 \times q_1 \times l_1$. Assume that 
$\frac{q_2}{q_1} = \frac{p_2}{p_1} \in \mathbb{Z}>1 \ \text{and} \ \frac{l_2}{l_1}=w \in \mathbb{Z}>1$ then the tensor $\mathcal{X} \otimes I_{\frac{p_2}{p_1} \times \frac{p_2}{p_1} \times wl_1}$ also satisfy the Eq.(\ref{main equation 3}).
\vspace{-0.7em}
\item
Define $\tilde{\alpha} = \gcd\!\left\{\frac{nh}{m},\,\frac{ak}{b}\right\},\
p = \frac{nh}{m\tilde{\alpha}},\
q = \frac{ak}{b\tilde{\alpha}}.$
Once Eq.(\ref{main equation 3}) admits a solution of the minimal admissible size, solutions can be constructed for all remaining admissible sizes.
\end{enumerate}
\end{remark}
\begin{example}
 Assume the following tensors $\mathcal{A} \in \mathbb{C}^{2 \times 2 \times 2},$ \ $\mathcal{B}\in \mathbb{C}^{2 \times 1 \times 2}$ and $\mathcal{C} \in \mathbb{C}^{4 \times 3 \times 2}$:
$$\begin{tabular}{c c |c c}
\hline
  \multicolumn{2}{c}{$\mathcal{A}(:,:,1)$} & 
  \multicolumn{2}{c}{$\mathcal{A}(:,:,2)$} \\
  
\hline
1&2&-1&2\\
0&1&0&-1\\
\end{tabular},
\quad
\begin{tabular}{c|c}
\hline
  \multicolumn{1}{c}{$\mathcal{B}(:,:,1)$} & 
  \multicolumn{1}{c}{$\mathcal{B}(:,:,2)$} \\
  
\hline
1&1\\
0&1\\

\end{tabular} \text{and}
\begin{tabular}{c c c|c c c}
\hline
  \multicolumn{3}{c}{$\mathcal{C}(:,:,1)$} & 
  \multicolumn{3}{c}{$\mathcal{C}(:,:,2)$} \\
  \hline
0&11&-5&0&11&-6\\
5&0&11&-7&0&11\\
0&1&0&0&1&0\\
1&0&1&3&0&-1\\

\end{tabular}.$$
A straightforward verification shows that the admissible solution sizes are
$2 \times 3 \times 2,\ 4 \times 6 \times 2,\ 2 \times 3 \times 1 \ \text{and}\ 4 \times 3 \times 2$ then the solution for tensor tensor Eq.(\ref{main equation 3}) is
$$\begin{tabular}{c c c|c c c}
\hline
  \multicolumn{3}{c}{$\mathcal{X}_1(:,:,1)$} & 
  \multicolumn{3}{c}{$\mathcal{X}_1(:,:,2)$} \\
  
\hline
2&1&0&3&5&1\\
1&1&2&-1&-2&3\\
\end{tabular} \in \mathbb{C}^{2 \times 3 \times 2}.$$
\end{example}
Analogous to Theorem \ref{toeplitz theorem}, we characterize the explicit structure of $\mathcal{C}$ using the properties of the semi-tensor product and the dimensional compatibility condition.
\begin{theorem}
Suppose that the tensor tensor Eq.\eqref{main equation 3} is solvable. Then the tensor
$\mathcal{C}$ must exhibit a block Toeplitz structure \ref{toeplitz def}. More precisely, $\mathcal{C}$ admits the block
representation
\[
\mathcal{C} =
\left[
\begin{array}{cccc}
\operatorname{Block}_{11}(\mathcal{C}) &
\operatorname{Block}_{12}(\mathcal{C}) &
\cdots &
\operatorname{Block}_{1b}(\mathcal{C}) \\
\vdots & \vdots & \vdots & \vdots \\
\operatorname{Block}_{\frac{h}{m\beta},1}(\mathcal{C}) &
\operatorname{Block}_{\frac{h}{m\beta},2}(\mathcal{C}) &
\cdots &
\operatorname{Block}_{\frac{h}{m\beta},b}(\mathcal{C}) \\
 \hdashline \\
\vdots & \vdots & \vdots & \vdots \\[6pt]
\hdashline \\
\operatorname{Block}_{\frac{h}{\beta}-\frac{h}{m\beta}+1,1}(\mathcal{C}) &
\operatorname{Block}_{\frac{h}{\beta}-\frac{h}{m\beta}+1,2}(\mathcal{C}) &
\cdots &
\operatorname{Block}_{\frac{h}{\beta}-\frac{h}{m\beta}+1,b}(\mathcal{C}) \\[4pt]
\vdots & \vdots & \vdots & \vdots \\
\operatorname{Block}_{\frac{h}{\beta},1}(\mathcal{C}) &
\operatorname{Block}_{\frac{h}{\beta},2}(\mathcal{C}) &
\cdots &
\operatorname{Block}_{\frac{h}{\beta},b}(\mathcal{C})
\end{array}
\right],\]\\
here each $\operatorname{Block}_{j,k}(\mathcal{C})$ is a Toeplitz tensor of identical size of $\frac{bh}{\beta}$  for $j=1, 2, \cdots \frac{h}{\beta}; k=1,2,\cdots b$. In particular, all blocks belong to $\mathbb{C}^{\beta\times \times \frac{k}{b} \times r}$.
\begin{proof}
By applying arguments analogous to those in the proof of Theorem~\ref{toeplitz theorem}, we conclude that
\[
\mathcal{A} \ltimes X
= (\mathcal{A} \otimes I_{\frac{h}{m\beta}})
(X \otimes I_{\frac{\alpha}{\beta}})
=
\left[
\begin{array}{cccc}
\operatorname{Block}_{11}(\mathcal{Z}) & \cdots & \operatorname{Block}_{1q}(\mathcal{Z}) \\
\operatorname{Block}_{21}(\mathcal{Z}) & \cdots & \operatorname{Block}_{2q}(\mathcal{Z}) \\
\vdots & \ddots & \vdots \\
\operatorname{Block}_{m1}(\mathcal{Z}) & \cdots & \operatorname{Block}_{mq}(\mathcal{Z})
\end{array}
\right]
= \mathcal{Z},
\]
where $\operatorname{Block}_{st}(\mathcal{Z}) \in
\mathbb{C}^{\frac{h}{m\beta } \times \frac{\alpha}{\beta}\times r}$
is a Toeplitz tensor for
$s = 1,\ldots,m,\; t = 1,\ldots,q.$ Taking the $j^{th}$ horizontal slice of tensor $\mathcal{Z}$ together with $k^{th}$ lateral slice of $\mathcal{B}$ yields
\[
\operatorname{Row}(\mathcal{Z})_j \ltimes \operatorname{Col}(\mathcal{B})_k
=
(\operatorname{Row}(\mathcal{Z})_j \otimes I_{\beta})
(\operatorname{Col}(\mathcal{B})_k \otimes I_{\frac{k}{b}})
=
\operatorname{Block}_{jk}(\mathcal{C}),
\]
Since $\mathcal{Z}$ is Toeplitz, each $\operatorname{Block}_{jk}(\mathcal{C})$ inherits the
Toeplitz structure, which completes the proof.
\end{proof}
\end{theorem}
\begin{remark}
     Assume that the tensor–tensor Eq.(\ref{main equation 3}) has a solution. If $\mathcal{A, B}$ are F-diagonal tensors with identical diagonal entries, then each sub-tensor of $\mathcal{C}$ exhibits a Circulant tensor structure.
\end{remark}

\section{Conclusion}

In this paper, we studied the tensor equation
$\mathcal{A}\ltimes\mathcal{X}\ltimes\mathcal{B}=\mathcal{C}$
under the framework of the semi-tensor product combined with the $t$-product. For the case in which the unknown tensor $\mathcal{X}$ is vector-valued, we established necessary and sufficient conditions for solvability and derived an equivalent criterion for the existence of solutions. Furthermore, for both matrix-valued and higher-order tensor-valued unknowns, the solvability of the tensor equation was characterized through appropriate compatibility conditions. We also analyzed explicit structural properties of the tensor $\mathcal{C}$, including Toeplitz and circulant structures, which are important from both theoretical and computational viewpoints. Moreover, the proposed framework has potential applications in multidimensional image processing, particularly in color image deblurring, where the tensor equation naturally models the degradation and restoration process while preserving the intrinsic tensor structure of image data and accommodating channel-dependent degradations.

We also note that the proposed framework can be easily extended to the semi-tensor product combined with the $c$-product. In particular, the compatibility conditions obtained in the $t$-product setting remain necessary same for the corresponding tensor equation associated with the $c$-product. However, the explicit form and structural characterization of the solutions may differ from those in the $t$-product framework. Therefore, further investigation is required to analyze the algebraic properties and characterize the structure of $\mathcal{C}$ in the semi-tensor product framework based on the $c$-product.

As future work, we intend to further develop this line of research from both theoretical and computational perspectives, with particular emphasis on the generalized semi-tensor product of third-order tensors. Specifically, for tensors
$
\mathcal{A}\in\mathbb{R}^{m\times n\times r}$
{and} $\mathcal{B}\in\mathbb{R}^{p\times q\times s},$
the semi-tensor product of special kind by
\[
\mathcal{A}\ltimes_{1}\mathcal{B}
=
\left(
\mathcal{A}\otimes
\mathcal{J}_{\frac{t_1}{n}\times\frac{t_1}{n}\times\frac{t_2}{r}}
\right)
*
\left(
\mathcal{B}\otimes
\mathcal{J}_{\frac{t_1}{p}\times\frac{t_1}{p}\times\frac{t_2}{s}}
\right),
\]
can be defined, where
$t_1=\operatorname{lcm}(n,p), \ 
t_2=\operatorname{lcm}(r,s),$
and
$\mathcal{J}_{a\times a\times b}
=
\frac{1}{ab}\,
\mathbf{1}_{a\times a\times b},$
with $\mathbf{1}_{a\times a\times b}$ denoting the third-order tensor whose first frontal slice consists entirely of ones, while all the remaining frontal slices are zero. It would be interesting to explore the algebraic properties, solvability conditions, and computational aspects of the proposed semi-tensor product defined above.
\section*{Conflicts of interest}
The authors declare that they have no conflict of interest.

 \section*{Data Availability Statements}
 The study does not involve the generation or analysis of datasets. Therefore, data availability is not applicable to this article.

\end{document}